\date{\today}
\numberwithin{equation}{section}
\newcommand{\dv}{\mathrm{div}\,}
\newcommand{\cl}{\mathrm{curl}\,}
\newtheorem{Theorem}{Theorem}[section]
\newtheorem{Lemma}{Lemma}[section]
\newtheorem{Proposition}{Proposition}[section]
\newtheorem{Remark}{Remark}[section]
\begin{document}

\title[Regularity criteria   for the  Boussinesq system]
 {Regularity criteria and uniform   estimates for the Boussinesq system
  with  temperature- dependent viscosity and  thermal diffusivity}

\author{Jishan Fan}
\address{ Department of Applied Mathematics,
 Nanjing Forestry University, Nanjing 210037, P.R.China}
\email{fanjishan@njfu.edu.cn}

 \author[Fucai Li]{Fucai Li$^*$}
\address{Department of Mathematics, Nanjing University, Nanjing
 210093, P.R. China}
 \email{fli@nju.edu.cn}
 \thanks{$^*$Corresponding author}

 \author[G. Nakamura]{Gen Nakamura}
\address{Department of Mathematics, Inha University, Incheon 402-751, Korea}
 \email{nakamuragenn@gmail.com}

\begin{abstract}
In this paper we establish some regularity criteria for the 3D Boussinesq system with
the temperature-dependent viscosity and thermal diffusivity. We also obtain some uniform  estimates for the
corresponding 2D case when the fluid viscosity coefficient is a positive constant.
\end{abstract}

\keywords{Boussinesq system, temperature-dependent viscosity, thermal diffusivity, regularity criterion, uniform regularity estimates}
\subjclass[2010]{35Q30, 76D03, 76D09}

\maketitle

\section{Introduction}
In this paper we establish some regularity criteria for the 3D Boussinesq system with
the temperature-dependent viscosity and thermal diffusivity (\!\cite{DR,Mi}):
\begin{align}
&\partial_tu+u\cdot\nabla u-\dv(\mu(\theta)(\nabla
u+\nabla u^\mathrm{T}))+\nabla\pi=\theta e_3,\label{baa}\\
&\partial_t\theta+u\cdot\nabla\theta-\dv(\kappa(\theta)\nabla\theta)=0,\label{bac}\\
&\dv u=0,\label{bab}\\
&(u,\theta)(x,0)=(u_0,\theta_0)(x),\ \ \ x\in\mathbb{R}^3.\label{bad}
\end{align}
Here the unknowns $u,\pi$, and $\theta$ denote the velocity, pressure and the
temperature of the fluid, respectively. $e_3: =(0,0,1)^\mathrm{T}$   is the unit vector in the vertical
$x_3$-direction.  The kinematic viscosity $\mu(\theta)$ and the  thermal conductivity $\kappa(\theta)$ are generally depending on
the temperature $\theta$.  Throughout this paper, we assume that $\mu(\theta)$ and $\kappa(\theta)$ are
smooth functions and satisfy
\begin{equation}
0<  {C}^{-1}_1\leq\mu(\theta), \kappa(\theta)\leq C_1<\infty\ \ \text{when} \ \
|\theta|\leq C_2 \label{bae}
\end{equation}
for some positive constant $C_1>1$ and $C_2>0$.  Boussinseq type system \eqref{baa}-\eqref{bac} are
 used to  model geophysical flows such as atmospheric fronts and
ocean circulations (see \cite{Ma,Pe}).

Due to its physical importance and mathematical complexity,
there are a lot of research papers concerned on   the Boussinseq system when both $\kappa(\theta)$ and $\mu(\theta)$ are independent of $\theta$, see
 \cite{AH,ACWa,ACWb,BS,Ch,CN,CW,DPa,DPb,FNW,FZ,HKRa,HKRb,HL,HK,HR,LLT,LPZ,MX} and the references cited therein.
When  $\kappa(\theta)$ or $\mu(\theta)$  depends on $\theta$, only a few results are aviable, see \cite{A,DGa,DGb,LB,LBb,WZ}.
In \cite{LBb,LB},  Lorca and Boldrini  obtained the global existence of weak
solution with small initial data and the local existence of strong solution for general
data to the system \eqref{baa}-\eqref{bab}, see also \cite{DGa,DGb}. In \cite{A}, Abidi obtained the
global existence and uniqueness result to the system \eqref{baa}-\eqref{bab} with $k=0$ in some critical spaces  when
the initial temperature is small.
  Recently, Wang and Zhang \cite{WZ} obtain the global existence of smooth
solutions to the problem \eqref{baa}-\eqref{bad} with general initial data in $\mathbb{R}^2$. However, whether or not
the smooth solution to the 3D boussinesq system \eqref{baa}-\eqref{bab} with general initial data blows up in
finite time is still a big open problem. In this paper we provide some regularity criteria related to this topic.
 We will prove

\begin{Theorem}\label{Ta}
Let $\kappa(\theta)\equiv1$  and $\mu(\theta)$ satisfy \eqref{bae} and $u_0,\theta_0\in
H^2(\mathbb{R}^3)$ with $\dv u_0=0$ in $\mathbb{R}^3$. Suppose that there exists a $T>0$ such that
one of the following two conditions holds:
\begin{align}
&u\in L^p(0,T;L^q(\mathbb{R}^3))\ \ with\ \
\frac{2}{p}+\frac{3}{q}=1,\ 3<q\leq\infty,\label{baf}\\
&\nabla u\in L^p(0,T;L^q(\mathbb{R}^3))\ \ with\ \
\frac{2}{p}+\frac{3}{q}=2,\ 3/2<q\leq\infty.\label{bag}
\end{align}
Then the solution $(u,\theta)$ of the problem \eqref{baa}-\eqref{bad} satisfies
\begin{equation}
u,\theta\in L^\infty(0,T;H^2(\mathbb{R}^3))\cap L^2(0,T;H^3(\mathbb{R}^3)).\label{bah}
\end{equation}
\end{Theorem}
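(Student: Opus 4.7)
The plan is to execute the classical Serrin/Prodi--Serrin bootstrap: first extract an $L^\infty$ bound on $\theta$ that makes $\mu(\theta)$ uniformly elliptic, and then derive a cascade of energy estimates in which the Serrin assumption \eqref{baf} (or \eqref{bag}) is used at each stage to absorb what would otherwise be a critical nonlinear term.

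The preliminary step exploits $\kappa\equiv 1$: the temperature equation reduces to $\partial_t\theta+u\cdot\nabla\theta=\Delta\theta$, and since $u$ is divergence-free the maximum principle yields $\|\theta(t)\|_{L^\infty}\le\|\theta_0\|_{L^\infty}$ for all $t\in[0,T]$. Choosing $C_2$ in \eqref{bae} at least as large as $\|\theta_0\|_{L^\infty}$, the bounds on $\mu(\theta)$ then hold pointwise in $(x,t)$. Standard $L^2$-energy estimates, combined with Gronwall against the buoyancy forcing $\theta e_3$, give $u,\theta\in L^\infty(0,T;L^2)\cap L^2(0,T;H^1)$.

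The principal work is the $H^1$ estimate. Testing the temperature equation against $-\Delta\theta$ and using \eqref{baf} together with H\"older and Gagliardo--Nirenberg to absorb $\int(u\cdot\nabla\theta)\Delta\theta\,dx$ produces a Gronwall inequality for $\|\nabla\theta\|_{L^2}^2+\int_0^t\|\Delta\theta\|_{L^2}^2\,ds$. For the velocity, since $\dv u=0$ one expands
$\dv(\mu(\theta)(\nabla u+\nabla u^{\mathrm T}))=\mu(\theta)\Delta u+\mu'(\theta)\nabla\theta\cdot(\nabla u+\nabla u^{\mathrm T})$
and tests \eqref{baa} against $-\Delta u$. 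The leading viscous term produces the coercive contribution $\int\mu(\theta)|\Delta u|^2\,dx\ge C_1^{-1}\|\Delta u\|_{L^2}^2$; the coupling term $\int\mu'(\theta)\nabla\theta\cdot(\nabla u+\nabla u^{\mathrm T})\Delta u\,dx$ is absorbed via H\"older, the previously controlled $\nabla\theta$, Gagliardo--Nirenberg interpolation, and Young's inequality; the convective term $\int(u\cdot\nabla u)\cdot\Delta u\,dx$ is handled in the classical Serrin manner using \eqref{baf}.

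With $u,\theta\in L^\infty(0,T;H^1)\cap L^2(0,T;H^2)$ in hand, the bootstrap to $H^2$ proceeds by differentiating both equations and repeating similar energy estimates at the next level, supplemented by a Calder\'on--Zygmund bound on the pressure $\pi$ obtained from $\Delta\pi=-\dv(u\cdot\nabla u)+\dv\dv(\mu(\theta)(\nabla u+\nabla u^{\mathrm T}))+\partial_{x_3}\theta$. The alternative hypothesis \eqref{bag} is treated analogously, simply by using $\nabla u$ directly (rather than $u$) to control the critical nonlinearity in each step. The main obstacle, already visible at the $H^1$ level, is that the $(x,t)$-dependence of $\mu(\theta)$ generates the $\nabla\theta$-coupling above, which must be absorbed simultaneously with the convective term; one is therefore forced to close the $\theta$- and $u$-estimates in tandem rather than sequentially, and the Gagliardo--Nirenberg exponents have to be chosen carefully so that each cross-term fits inside the coercive viscous contribution with a Gronwall-admissible remainder.
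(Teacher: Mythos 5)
Your outline reproduces the standard skeleton (maximum principle for $\theta$, $L^2$ energy, then higher-order energy estimates with the Serrin condition absorbing the convective term), but it misses the one idea that actually makes the theorem work, and the step where you claim to close the $H^1$ level does not close as described. The paper's pivotal step is an \emph{a priori} $L^\infty(0,T;L^4)$ bound on $\nabla\theta$, obtained \emph{before} any velocity gradient estimate: one applies $\partial_i$ to the heat equation (here $\kappa\equiv 1$ is essential, so the dissipation produces the good term $\tfrac34\int|\nabla|\nabla\theta|^2|^2\,dx$), multiplies by $(\partial_i\theta)^3$, and uses the Serrin hypothesis on $u$ (or on $\nabla u$ in Case~II) to absorb $\int |u|\,|\nabla\theta|^2\,|\nabla|\nabla\theta|^2|\,dx$. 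With $\|\nabla\theta\|_{L^\infty_tL^4_x}\le C$ in hand, the variable-viscosity coupling term in the $-\Delta u$ test becomes
\begin{equation*}
\int|\nabla\theta|\,|\nabla u|\,|\Delta u|\,dx\le \|\nabla\theta\|_{L^4}\|\nabla u\|_{L^4}\|\Delta u\|_{L^2}\le C\|\nabla u\|_{L^2}^{1/4}\|\Delta u\|_{L^2}^{7/4},
\end{equation*}
which is subcritical and Gronwall-admissible.

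Your proposal instead derives only $\theta\in L^\infty(0,T;H^1)\cap L^2(0,T;H^2)$ (by testing against $-\Delta\theta$) and then tries to absorb the coupling term using ``the previously controlled $\nabla\theta$,'' closing the two $H^1$ estimates ``in tandem.'' This fails: from $\nabla\theta\in L^\infty L^2\cap L^2 H^1$ one only gets $\nabla\theta\in L^{8/3}(0,T;L^4)$ by interpolation, and every Young-inequality splitting of $\|\nabla\theta\|_{L^4}\|\nabla u\|_{L^4}\|\Delta u\|_{L^2}$ then requires a combined power of the dissipation norms $\|\Delta\theta\|_{L^2}$ and $\|\Delta u\|_{L^2}$ strictly exceeding $2$ (e.g.\ $\|\nabla\theta\|_{L^2}^{1/2}\|\Delta\theta\|_{L^2}^{3/2}\|\nabla u\|_{L^2}^{1/2}\|\Delta u\|_{L^2}^{3/2}$), so no choice of Gagliardo--Nirenberg exponents makes the remainder integrable in time; a tandem Gronwall argument cannot close. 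The Serrin condition does not help here either, since it constrains $u$ (or $\nabla u$), not the product $\nabla\theta\cdot\nabla u$. You need the stronger, time-uniform $L^4$ information on $\nabla\theta$ first; this is precisely the ordering of the paper's proof. (Two smaller points: the Calder\'on--Zygmund pressure estimate you invoke is unnecessary, as the pressure vanishes against the divergence-free test fields used throughout; and at the $H^2$ level the paper also needs the $L^3$ bound on $\Delta\theta$ coming from $\theta\in L^2(0,T;H^3)$, which again rests on the $L^4$ gradient bound.)
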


\begin{Theorem}\label{Tb}
Let $\mu(\theta)$ and $\kappa(\theta)$ satisfy (\ref{bae}). Let
$u_0,\theta_0\in H^2(\mathbb{R}^3)$ with $\dv u_0=0$ in $\mathbb{R}^3$. Assume
that (\ref{baf}) holds   with $3<q<\infty$, then the solution $(u,\theta)$ of the problem \eqref{baa}-\eqref{bad}   satisfies  (\ref{bah}).

\end{Theorem}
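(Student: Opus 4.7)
The plan is to establish a priori estimates for $(u,\theta)$ in $L^\infty(0,T;H^2)\cap L^2(0,T;H^3)$ under the Serrin-type hypothesis \eqref{baf} with $3<q<\infty$, and to conclude by a standard local-existence/continuation argument. The first step is to apply the maximum principle to \eqref{bac}: since $u$ is divergence-free and the equation is in divergence form, testing with $|\theta|^{p-2}\theta$ and letting $p\to\infty$ (or a direct comparison argument) yields $\|\theta(t)\|_{L^\infty}\le\|\theta_0\|_{L^\infty}$; by \eqref{bae}, this keeps $\mu(\theta),\kappa(\theta)\in[C_1^{-1},C_1]$ along the flow. To tame the quasilinearity of \eqref{bac}, I would introduce the Kirchhoff transform $\vartheta:=\int_0^\theta\kappa(s)\,ds$. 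Because $\kappa$ is smooth and bi-Lipschitz on the compact range of $\theta$, $\vartheta$ and $\theta$ are equivalent in every $W^{k,p}$-norm, and a direct calculation shows that $\vartheta$ satisfies
$$
\partial_t\vartheta+u\cdot\nabla\vartheta=\kappa(\theta)\,\Delta\vartheta,
$$
whose principal part is \emph{linear} in $\vartheta$ with a bounded, uniformly positive coefficient.

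Standard $L^2$ tests then give $u,\theta\in L^\infty L^2\cap L^2 H^1$. For the $H^1$-bound on $\theta$, I would test the $\vartheta$-equation against $-\Delta\vartheta$; the only nontrivial term is $\int(u\cdot\nabla\vartheta)\Delta\vartheta$, which I would bound by H\"older, the Gagliardo--Nirenberg inequality
$$
\|\nabla\vartheta\|_{L^{2q/(q-2)}}\le C\|\nabla\vartheta\|_{L^2}^{1-3/q}\|\Delta\vartheta\|_{L^2}^{3/q}
$$
(valid for $3<q<\infty$), and Young's inequality, producing $\epsilon\|\Delta\vartheta\|_{L^2}^2+C\|u\|_{L^q}^{p}\|\nabla\vartheta\|_{L^2}^2$ with $p=2q/(q-3)$ matching $2/p+3/q=1$. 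Gronwall then gives $\vartheta\in L^\infty H^1\cap L^2 H^2$, and transferring back through $\nabla\vartheta=\kappa(\theta)\nabla\theta$ and $\Delta\vartheta=\kappa(\theta)\Delta\theta+\kappa'(\theta)|\nabla\theta|^2$ yields $\theta\in L^\infty H^1\cap L^2 H^2$ after a routine interpolation on the nonlinear remainder. For the $H^1$-bound on $u$, I test \eqref{baa} against $-\Delta u$; the convective term $\int(u\cdot\nabla u)\cdot\Delta u$ is controlled by the Serrin condition as above, and the new cross-term $\int\mu'(\theta)(\nabla u+\nabla u^T)\nabla\theta\cdot\Delta u$ by H\"older, the Sobolev embedding $\|\nabla u\|_{L^6}\le C\|\Delta u\|_{L^2}$, and the $H^1$-control on $\theta$ already in hand.

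The $H^2$-bound is then obtained by iterating the scheme one derivative higher: apply $\partial_j$ to \eqref{baa} and to the $\vartheta$-equation, test the results with $-\Delta\partial_j u$ and $-\Delta\partial_j\vartheta$ respectively, and estimate each new nonlinear contribution via Gagliardo--Nirenberg interpolation together with the $H^1$-bounds just established and the Serrin control on $u$. A joint Gronwall argument then produces $(u,\vartheta)\in L^\infty H^2\cap L^2 H^3$, whence \eqref{bah} follows by the equivalence of $\vartheta$ and $\theta$.

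\textbf{Main obstacle.} Compared to Theorem \ref{Ta}, the essential new difficulty is that both diffusion coefficients depend on $\theta$, producing top-derivative terms that are \emph{super-quadratic} in 3D: $\kappa'(\theta)|\nabla\theta|^2\Delta\theta$ in the temperature equation and $\mu'(\theta)\nabla\theta\cdot\nabla u\cdot\Delta u$ in the momentum equation. The Kirchhoff transform neutralizes the first by absorbing it into the linear principal part of the $\vartheta$-equation; the second must be split delicately using H\"older, Sobolev, and the just-established $H^1$-bound on $\theta$, and it is precisely the restriction $3<q<\infty$ in \eqref{baf} that forces the Gagliardo--Nirenberg exponents arising in the convective estimates to combine into powers below $2$, so that Young's inequality and Gronwall can close the loop. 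This is also why neither the endpoint $q=\infty$ nor the gradient-based criterion \eqref{bag} is accessible in the present, fully coupled setting.
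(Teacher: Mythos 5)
Your overall architecture (maximum principle, $L^2$ energy, then $H^1$ and $H^2$ estimates closed by Gronwall under the Serrin condition) matches the paper's, and the Kirchhoff transform is a sensible device for the temperature equation. But there is a genuine gap at exactly the point you flag as the ``main obstacle'': the cross term $\int\mu'(\theta)\nabla\theta\cdot(\nabla u+\nabla u^{\mathrm T})\cdot\Delta u\,dx$ in the $H^1$ estimate for $u$. With only the energy-level information $\theta\in L^\infty(0,T;H^1)\cap L^2(0,T;H^2)$, the bound you propose,
\[
\int|\nabla\theta|\,|\nabla u|\,|\Delta u|\,dx\le\|\nabla\theta\|_{L^3}\|\nabla u\|_{L^6}\|\Delta u\|_{L^2}\le C\|\nabla\theta\|_{L^3}\|\Delta u\|_{L^2}^2,
\]
produces the full dissipation $\|\Delta u\|_{L^2}^2$ multiplied by a quantity that is merely bounded (in $L^4_t$), not small; it can be neither absorbed nor Gronwalled, since the factor multiplies $\|\Delta u\|_{L^2}^2$ rather than $\|\nabla u\|_{L^2}^2$. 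Other Hölder splittings fare no better: for instance $\|\nabla\theta\|_{L^6}\|\nabla u\|_{L^3}\|\Delta u\|_{L^2}\le\epsilon\|\Delta u\|_{L^2}^2+C\|\Delta\theta\|_{L^2}^4\|\nabla u\|_{L^2}^2$ would require $\Delta\theta\in L^4_tL^2_x$, which the energy estimates do not provide. A term of the same supercritical type ($\kappa'(\theta)\nabla\theta\,\Delta\vartheta\cdot\nabla\Delta\vartheta$) reappears in your $H^2$ step for $\vartheta$, because the Kirchhoff transform linearizes only the principal part and not the higher-order commutators, and it does nothing for the momentum equation, where $\mu(\theta)$ still varies.

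The paper closes these terms by first proving $\nabla\theta\in L^p(0,T;L^q(\mathbb{R}^3))$ with the \emph{same Serrin exponents} $2/p+3/q=1$ as $u$. This rests on two ingredients absent from your proposal: (i) a De Giorgi--Nash--Moser iteration (Proposition~\ref{Proa}, adapted from Wang--Zhang), which uses hypothesis \eqref{baf} to show $\theta\in C^\alpha([0,T]\times\mathbb{R}^3)$; and (ii) Amann's maximal $L^p$--$L^q$ regularity for $\partial_t\theta-\dv(\kappa(\theta)\nabla\theta)=-\dv(u\theta)$, applicable precisely because the coefficient $\kappa(\theta)$ is then Hölder continuous, giving $\|\nabla\theta\|_{L^p(0,T;L^q)}\le C\|u\theta\|_{L^p(0,T;L^q)}+C\le C$. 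With that bound one estimates $\|\nabla\theta\|_{L^q}\|\nabla u\|_{L^{2q/(q-2)}}\|\Delta u\|_{L^2}\le\epsilon\|\Delta u\|_{L^2}^2+C\|\nabla\theta\|_{L^q}^p\|\nabla u\|_{L^2}^2$ and Gronwall applies; the restriction $3<q<\infty$ is needed for this maximal-regularity step, not merely for Gagliardo--Nirenberg bookkeeping. This is the missing idea, and without it your scheme does not close.
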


%

\begin{Remark}
We can also obtain the   regularity criteria \eqref{baf} or  \eqref{bag}
 for the Cahn- \linebreak Hilliard-Navier-Stokes system \cite{HH,GP}:
\begin{align*}
&\partial_tu+u\cdot\nabla u-\nabla\cdot\mu(\phi)(\nabla
u+\nabla u^\mathrm{T})+\nabla\pi=\eta\nabla\phi,\ \
 \dv u=0,\\
&\partial_t\phi+u\cdot\nabla\phi-\Delta\eta=0,\\
&\eta=-\Delta\phi+f'(\phi),\ f(\phi)=\frac{1}{4}(\phi^2-1)^2,\\
&(u,\phi)(x,0)=(u_0,\phi_0)(x),\ \ \ x\in\mathbb{R}^3.
\end{align*}
Since the proof is similar to that of  Theorem \ref{Ta}, we omit the
details here.
\end{Remark}

Because the local-in-time well-posedness
can be obtained in a standard way (see Propositions \ref{le2.5} and \ref{le2.6} below), to prove Theorems \ref{Ta} and \ref{Tb},
 it suffices to obtain a priori estimates under the conditions \eqref{baf} or \eqref{bag}.
Due to  the equations \eqref{baa} and \eqref{bac} are strongly coupled, it is difficult to
obtain the  $L^p$ estimates of $\nabla \theta$ and the H\"{o}lder continuity of $\theta$, we shall
use some results obtained in \cite{Am,WZ}, Sobolev imbedding, and the ingenious
applications of H\"{o}lder and  Gagliardo-Nirenberg inequalities  to obtain the desired estimates.

\medskip
Next, we consider the Boussinesq system \eqref{baa}-\eqref{bac} in two-dimensional spatial space $\mathbb{R}^2$ and
 let  the kinematic viscosity $\mu(\theta)$ be  a  positive constant, i.e,
$\mu(\theta)\equiv \epsilon>0$. In this case the system reads
\begin{align}
&\partial_tu+u\cdot\nabla u-\epsilon\Delta u+\nabla\pi=\theta
e_2,\label{bai}\\
&\partial_t\theta+u\cdot\nabla\theta-\dv(\kappa(\theta)\nabla\theta)=0,\label{bak}\\
&\dv u=0,\label{baj}\\
&(u,\theta)(x,0)=(u_0,\theta_0)(x),\ \ \ x\in\mathbb{R}^2.\label{bal}
\end{align}
Here $e_2:=(0,1)^\mathrm{T}$. As before, the    thermal conductivity $\kappa(\theta)$ is assumed  to be
  smooth and satisfy
\begin{equation}0<  {C}^{-1}_1\leq \kappa(\theta)\leq C_1<\infty\ \ \text{when} \ \
|\theta|\leq C_2 \label{bam}
\end{equation}
for some positive constant $C_1>1$ and $C_2>0$.

Notice that in \cite{WZ}  Wang and Zhang established   the global existence of classical
solutions for the problem (\ref{bai})-(\ref{bal}) with general initial data and fixed $\epsilon>0$.
In  this paper we give some  uniform-in-$\epsilon$ estimates for smooth solution to the
problem (\ref{bai})-(\ref{bal}). Our result reads

\begin{Theorem}\label{Tc}
Let $\kappa(\theta)$ satisfy (\ref{bam}) and $\epsilon\in(0,1)$. Let $\theta_0\in H^2(\mathbb{R}^2)$ and
$u_0\in H^3(\mathbb{R}^2)$ with $\dv u_0=0$ in $\mathbb{R}^2$. Then, for any given $T>0$, the solution
$(u,\theta)$ to the problem \eqref{bai}-\eqref{bal} satisfies
\begin{equation}
\|u\|_{L^\infty(0,T;H^3(\mathbb{R}^2))}+\|\theta\|_{L^\infty(0,T;H^2(\mathbb{R}^2))}
+\|\theta\|_{L^2(0,T;H^3(\mathbb{R}^2))}\leq C,\label{ban}
\end{equation}
where  $C$ is  a positive constant independent of
$\epsilon$.
  \end{Theorem}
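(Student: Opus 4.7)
Global existence of classical solutions for each fixed $\epsilon>0$ being guaranteed by Wang--Zhang \cite{WZ}, the task reduces to obtaining $\epsilon$-independent a priori bounds. The plan is to climb the regularity ladder $L^2\to H^1\to H^2\to H^3$ using the 2D vorticity formulation, discarding the $\epsilon$-dissipation as a harmless nonnegative term at every step. Set $\omega:=\partial_1u_2-\partial_2u_1$; taking the curl of \eqref{bai} gives
\begin{equation*}
\partial_t\omega+u\cdot\nabla\omega-\epsilon\Delta\omega=\partial_1\theta,
\end{equation*}
with no vortex stretching thanks to two dimensions. A maximum-principle argument applied to \eqref{bak}, available because $\mathrm{div}\,u=0$ and $\kappa(\theta)\ge C_1^{-1}$, gives $\|\theta(\cdot,t)\|_{L^p}\le\|\theta_0\|_{L^p}$ for every $p\in[2,\infty]$, uniformly in $\epsilon$. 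Basic $L^2$ energy estimates on \eqref{bai} and \eqref{bak} then yield $u\in L^\infty_tL^2$ and $\nabla\theta\in L^2_tL^2$. Testing the vorticity equation with $\omega$ and dropping $\epsilon\|\nabla\omega\|_{L^2}^2\ge0$ gives $\tfrac{d}{dt}\|\omega\|_{L^2}^2\le 2\|\nabla\theta\|_{L^2}\|\omega\|_{L^2}$, so Gr\"onwall delivers $\omega\in L^\infty_tL^2$ and hence $\nabla u\in L^\infty_tL^2$, uniformly in $\epsilon$.

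\textbf{Higher regularity for $\theta$.} Testing \eqref{bak} against $-\Delta\theta$ and using $\mathrm{div}(\kappa(\theta)\nabla\theta)=\kappa(\theta)\Delta\theta+\kappa'(\theta)|\nabla\theta|^2$ produces
\begin{equation*}
\tfrac{1}{2}\tfrac{d}{dt}\|\nabla\theta\|_{L^2}^2+C_1^{-1}\|\Delta\theta\|_{L^2}^2 \le \int|u\cdot\nabla\theta||\Delta\theta|\,dx+\int|\kappa'(\theta)||\nabla\theta|^2|\Delta\theta|\,dx,
\end{equation*}
which is closed by the 2D inequality $\|f\|_{L^4}^2\le C\|f\|_{L^2}\|\nabla f\|_{L^2}$, the $L^\infty$ bound on $\theta$, and the established bound on $\|u\|_{H^1}$, giving $\nabla\theta\in L^\infty_tL^2\cap L^2_tH^1$. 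A second differentiation of \eqref{bak}, testing with $\Delta^2\theta$ and controlling the additional $\kappa^{(k)}(\theta)$-products via the $L^\infty$ bound on $\theta$ and Gagliardo--Nirenberg interpolation, upgrades this to $\theta\in L^\infty_tH^2\cap L^2_tH^3$.

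\textbf{Top level for $u$.} Using the embedding $H^2\hookrightarrow L^\infty$ in 2D applied to $\nabla\theta$, one obtains $\nabla\theta\in L^2_tL^\infty$, whence $\partial_1\theta\in L^1_tL^\infty$; the maximum principle for the vorticity equation (the viscous term being merely dissipative) then yields $\omega\in L^\infty_tL^\infty$, uniformly in $\epsilon$. For the final $H^3$ bound on $u$, apply $\nabla^2$ to the vorticity equation and test with $\nabla^2\omega$; the transport commutators $\nabla u\cdot\nabla^2\omega$ and $\nabla^2 u\cdot\nabla\omega$ are controlled by the 2D logarithmic Sobolev inequality
\begin{equation*}
\|\nabla u\|_{L^\infty}\le C\bigl(1+\|\omega\|_{L^\infty}\log(e+\|\omega\|_{H^2})\bigr),
\end{equation*}
combined with a Gr\"onwall-in-log argument and the forcing bound $\|\partial_1\theta\|_{H^2}\in L^2_t$ already obtained. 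This yields $\omega\in L^\infty_tH^2$, i.e.\ $u\in L^\infty_tH^3$.

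\textbf{Main obstacle.} The crucial difficulty is exactly this final $H^3$ step for $u$: because the viscosity cannot be used to absorb the transport commutators, one is effectively performing an inviscid 2D Boussinesq energy estimate, and the double-exponential blow-up threatened by naive Gr\"onwall must be prevented by combining the logarithmic Sobolev inequality with the genuine smoothing of the nonlinear heat equation for $\theta$. The temperature-dependent coefficient $\kappa(\theta)$ is a secondary but persistent complication, spawning terms like $\kappa'(\theta)|\nabla\theta|^2$ and $\kappa''(\theta)|\nabla\theta|^3$ in the higher-order energies, but these are tamed once $\theta$ is pointwise bounded and $\nabla\theta$ belongs to $L^\infty_tL^2\cap L^2_tL^\infty$.
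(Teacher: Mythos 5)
Your overall architecture (maximum principle for $\theta$, $L^2$ energy, 2D vorticity estimates, $\omega\in L^\infty_tL^\infty$, then $H^3$ for $u$ via commutator plus logarithmic Sobolev and log-Gr\"onwall) matches the paper's. The genuine gap is in your intermediate step for $\theta$: the claim that testing \eqref{bak} against $-\Delta\theta$ closes at the $H^1$ level. Writing $\dv(\kappa(\theta)\nabla\theta)=\kappa(\theta)\Delta\theta+\kappa'(\theta)|\nabla\theta|^2$ and estimating the quasilinear term by Ladyzhenskaya gives
\begin{equation*}
\int|\kappa'(\theta)||\nabla\theta|^2|\Delta\theta|\,dx\leq C\|\nabla\theta\|_{L^4}^2\|\Delta\theta\|_{L^2}\leq C\|\nabla\theta\|_{L^2}\|\Delta\theta\|_{L^2}^2,
\end{equation*}
which is \emph{critical}: it scales exactly like the dissipation $C_1^{-1}\|\Delta\theta\|_{L^2}^2$ and can only be absorbed if $\|\nabla\theta\|_{L^2}$ is small, which is not available (at that stage you only know $\nabla\theta\in L^2_tL^2$, not a pointwise-in-time smallness). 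The $L^\infty$ bound on $\theta$ and the $H^1$ bound on $u$ do not help with this term, and the substitution $\widetilde\kappa(\theta)=\int_0^\theta\kappa$ runs into the same circularity when converting $\Delta\widetilde\kappa(\theta)$ back into $\Delta\theta$. Since your $H^2$ step and the $\nabla\theta\in L^2_tL^\infty$ input to the vorticity maximum principle both sit on top of this, the whole upper half of the ladder is unsupported as written.

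This is precisely the difficulty the paper is organized around: it first proves uniform H\"older continuity of $\theta$ by a De Giorgi--Nash--Moser argument (following Wang--Zhang, with the truncation-energy computation adapted to the present setting), and then invokes Amann's maximal $L^p$-regularity for the divergence-form parabolic equation $\partial_t\theta-\dv(\kappa(\theta)\nabla\theta)=-\dv(u\theta)$ — applicable because the coefficient $\kappa(\theta(x,t))$ is now H\"older continuous — to obtain $\|\nabla\theta\|_{L^4(0,T;L^4)}\leq C\|u\theta\|_{L^4(0,T;L^4)}+C\leq C$ directly, bypassing any $H^1$ energy estimate for $\theta$. Only with $\nabla\theta\in L^4_tL^4$ in hand does the $H^2$ energy estimate for $\theta$ close (the dangerous terms become $C\|\nabla\theta\|_{L^4}^4\|\Delta\theta\|_{L^2}^2$ with an $L^1_t$ coefficient, amenable to Gr\"onwall). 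To repair your proof you would need to import this H\"older-continuity-plus-Amann step, or find another route to an a priori $L^p_tL^q$ bound on $\nabla\theta$ that does not pass through the critical quasilinear term.
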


Finally,  considering the following 2D Boussinesq system
\begin{align}
&\partial_tu+u\cdot\nabla u-\Delta u+\nabla\pi=\theta
e_2,\label{bao}\\
&\partial_t\theta+u\cdot\nabla\theta-\epsilon\dv(\kappa(\theta)\nabla\theta)=0,\label{baq}\\
&\dv u=0,\label{bap}\\
&(u,\theta)(x,0)=(u_0,\theta_0)(x),\ \  \ x\in
\mathbb{R}^2,\label{bar}
\end{align}
we can obtain a similar result on   uniform-in-$\epsilon$ estimates for smooth solution to the
problem (\ref{bao})-(\ref{bar}).
\begin{Theorem}\label{Td}
Let $\kappa(\theta)$ satisfy (\ref{bam}) and $\epsilon\in(0,1)$. Let
$u_0,\theta_0\in H^2(\mathbb{R}^2)$ and $\dv u_0=0$ in $\mathbb{R}^2$. Then, for any given $T>0$, the solution
$(u,\theta)$ to the problem \eqref{bao}-\eqref{bar} satisfies
\begin{equation}
\|u\|_{L^\infty(0,T;H^2(\mathbb{R}^2))}+\|u\|_{L^2(0,T;H^3(\mathbb{R}^2))}
+\|\theta\|_{L^\infty(0,T;H^2(\mathbb{R}^2))}\leq C,\label{bas}
\end{equation}
where  $C$ is  a positive constant independent of $\epsilon$.
  \end{Theorem}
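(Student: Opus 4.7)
The plan is to derive a priori estimates for smooth solutions with all constants independent of $\epsilon\in(0,1)$ and then invoke the local well-posedness theory for the system. The underlying structure is that of the 2D viscous Boussinesq system with vanishing thermal diffusion (cf.\ \cite{WZ}), the $\epsilon$-diffusion term in \eqref{baq} providing only favorable dissipation whose $\kappa'(\theta),\kappa''(\theta)$ companions scale controllably as $\epsilon\leq1$.

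I would first collect the baseline estimates. Testing \eqref{baq} against $|\theta|^{p-2}\theta$ and using $\dv u=0$ together with $\kappa\geq C_1^{-1}>0$ yields the maximum-principle bound $\|\theta(t)\|_{L^p(\mathbb{R}^2)}\leq\|\theta_0\|_{L^p(\mathbb{R}^2)}$ for every $p\in[2,\infty]$, uniformly in $\epsilon$. Multiplying \eqref{bao} by $u$ and using this as a source then gives $u\in L^\infty(0,T;L^2)\cap L^2(0,T;H^1)$. For the first gain of regularity on $u$ I would exploit the 2D scalar vorticity $\omega=\partial_1u^2-\partial_2u^1$, which satisfies $\partial_t\omega+u\cdot\nabla\omega-\Delta\omega=\partial_1\theta$; testing against $\omega$ and integrating the source by parts, $\int\partial_1\theta\,\omega\,dx=-\int\theta\,\partial_1\omega\,dx\leq\|\theta\|_{L^\infty}\|\nabla\omega\|_{L^2}$, whence $\omega\in L^\infty(0,T;L^2)\cap L^2(0,T;H^1)$, equivalently $u\in L^\infty(0,T;H^1)\cap L^2(0,T;H^2)$ by Biot--Savart.

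The delicate point is the uniform-in-$\epsilon$ $H^2$ bound on $\theta$: since $\epsilon\to0$ degenerates the thermal dissipation, parabolic smoothing for $\theta$ is unavailable and the regularity must be extracted from the transport structure. I would apply $\Delta$ to \eqref{baq} and test with $\Delta\theta$; the incompressibility condition kills the top-order transport piece, the remaining transport contribution is dominated by $C\|\nabla u\|_{L^\infty}\|\Delta\theta\|_{L^2}^2$ plus Gagliardo--Nirenberg lower-order terms, and the $\epsilon$-diffusion produces the nonnegative dissipation $\epsilon\int\kappa|\nabla\Delta\theta|^2\,dx$ alongside $\epsilon$-corrections in $\kappa'(\theta),\kappa''(\theta)$ with powers of $\nabla\theta,\nabla^2\theta$ which are absorbed by that dissipation because $\epsilon\leq1$ and $\theta$ is $L^\infty$-bounded. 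To close the resulting Gronwall inequality I need $\int_0^T\|\nabla u\|_{L^\infty}\,dt<\infty$; I would obtain this through a Brezis--Gallouet--Wainger-type logarithmic inequality
\[\|\nabla u\|_{L^\infty}\leq C\bigl(1+\|\omega\|_{L^\infty}\log(e+\|u\|_{H^3})\bigr),\]
combined with an $L^p$-bootstrap $\omega\in L^\infty(0,T;L^p)$ for all $p<\infty$ from the vorticity equation (testing with $|\omega|^{p-2}\omega$ and integrating the $\partial_1\theta$-source by parts against $\|\theta\|_{L^\infty}$), and a simultaneous Gronwall argument on the quantity $\|\theta\|_{H^2}^2+\|\omega\|_{H^1}^2$. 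Once $\theta\in L^\infty(0,T;H^2)$ is established, the source $\partial_1\theta\in L^\infty(0,T;H^1)$ drives the vorticity identity tested against $-\Delta\omega$ to yield $\omega\in L^\infty(0,T;H^1)\cap L^2(0,T;H^2)$, equivalently $u\in L^\infty(0,T;H^2)\cap L^2(0,T;H^3)$, which is \eqref{bas}. The main obstacle is precisely the closure of this simultaneous Gronwall loop coupling $\|\theta\|_{H^2}$ with $\|u\|_{H^2}$: the transport-driven growth of $\|\theta\|_{H^2}$ has to be controlled via a logarithmically sharp bound on $\|\nabla u\|_{L^\infty}$, and the $\kappa'(\theta),\kappa''(\theta)$-perturbations from the $\epsilon$-diffusion must be carefully absorbed rather than exploited, so that all constants remain $\epsilon$-independent down to $\epsilon=0$.
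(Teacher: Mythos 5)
Your outline reproduces the easy parts of the argument (maximum principle, $L^2$ energy, the $H^1$/$H^2$ gain on $u$ from the full viscosity, and the final Gronwall loop), but it glosses over exactly the step that is the real content of this theorem: the uniform-in-$\epsilon$ control of the terms generated by $\kappa'(\theta)$ and $\kappa''(\theta)$ in the higher-order estimates for $\theta$. When you apply $\Delta$ to \eqref{baq} and test with $\Delta\theta$, the diffusion contributes, besides the good term $\epsilon\int\kappa(\theta)|\nabla\Delta\theta|^2dx$, remainders of the schematic form $\epsilon\int|\nabla\theta|\,|\nabla^2\theta|\,|\nabla\Delta\theta|\,dx$ and $\epsilon\int|\nabla\theta|^3|\nabla\Delta\theta|\,dx$. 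Your claim that these are ``absorbed by that dissipation because $\epsilon\leq1$ and $\theta$ is $L^\infty$-bounded'' does not hold: absorbing them costs, after Young and Gagliardo--Nirenberg, a factor like $\epsilon\|\nabla\theta\|_{L^4}^4\|\Delta\theta\|_{L^2}^2$, and the $L^\infty$ bound on $\theta$ gives no control whatsoever on $\|\nabla\theta\|_{L^4}$. The same obstruction already appears one level down: even the uniform $H^1$ bound on $\theta$ (which you never address, and which you implicitly need, e.g.\ to handle the commutator term $\Delta u\cdot\nabla\theta$) produces an uncontrolled $\epsilon\int\kappa'(\theta)|\nabla\theta|^2\Delta\theta\,dx$. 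This is precisely the difficulty flagged in Remark \ref{Rc}.

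The paper's way out is a specific device you are missing: set $\widetilde\kappa(\theta)=\int_0^\theta\kappa(\xi)\,d\xi$, so that \eqref{baq} becomes $\partial_t\widetilde\kappa+u\cdot\nabla\widetilde\kappa-\epsilon\kappa(\theta)\Delta\widetilde\kappa=0$ with \emph{no} first-order $\kappa'$ remainders; testing with $-\Delta\widetilde\kappa$ and using $\nabla u\in L^2(0,T;L^\infty)$ yields $\theta\in L^\infty(0,T;H^1)$ together with the $\epsilon$-weighted bound $\sqrt\epsilon\,\|\Delta\theta\|_{L^2(0,T;L^2)}\leq C$, hence $\epsilon\|\nabla\theta\|_{L^4(0,T;L^4)}^4\leq C$. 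This last bound is exactly what is needed to close the $W^{1,4}$ estimate \eqref{bfl} for $\theta$, which in turn tames the $\epsilon\kappa'$ remainders in the $H^2$ estimate. Without this chain your Gronwall loop does not close, so the gap is a missing idea rather than a missing computation. Separately, your route to $\int_0^T\|\nabla u\|_{L^\infty}\,dt<\infty$ via a Brezis--Gallouet--Wainger inequality and an $L^p$ vorticity bootstrap is more roundabout than necessary (and delicate, since $\|u\|_{H^3}$ is only controlled in $L^2_t$ here): because the velocity equation has unit viscosity, $f=\theta e_2-u\cdot\nabla u\in L^2(0,T;L^p)$ for $p>2$ and Stokes maximal regularity gives $u\in L^2(0,T;W^{2,p})\hookrightarrow L^2(0,T;W^{1,\infty})$ directly, as in \eqref{bfe}--\eqref{bff}.
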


\begin{Remark}\label{Rc}
 In   \cite{Ch}, Chae has proved  similar results to Theorems \ref{Tc} and \ref{Td} when $\kappa(\theta) \equiv 1$, it seems impossible to
 apply   his method   directly to our case. In fact,   if we follow his arguments we shall
encounter some unpleasant terms involved $\kappa(\theta)$ which are out of control.
\end{Remark}

We give some comments on the proofs of Theorems \ref{Tc} and \ref{Td}.   Because the global-in-time well-posedness results
 (see Propositions \ref{le2.7} and \ref{le2.8} below), to complete our proofs, we only
need to prove the \emph{a priori} estimates \eqref{ban} and \eqref{bas}.   We shall employ an elaborate nonlinear
energy method to obtain these desired bounds. More precisely, we first use maximum principle
to obtain  $L^\infty$ estimates of $\theta$. Next, we  derive an
energy estimate based on $L^2$ energy. Finally, we use Amann's $L^p$  estimates on uniform parabolic equations,
 Sobolev imbedding, bilinear commutator estimates, logarithmic Sobolev inequality,  and Gagliardo-Nirenberg inequalities  to obtain
the desired higher order spatial  estimates on  $\theta$ and  $u$.

This paper is organized as follows. In Section 2, we   recall some basic inequalities
and state the local/global existence results on the problems \eqref{baa}-\eqref{bad}, \eqref{bai}-\eqref{bal}, and
 \eqref{bao}-\eqref{bar}. The proofs of Theorems  \ref{Ta}, \ref{Tb}, \ref{Tc}, and    \ref{Td} are presented in the subsequent four sections.

\medskip
\section{Preliminary}

In this section, we  first recall some basic
inequalities  which shall  be used frequently.

\begin{Lemma}[Logarithmic Sobolev inequality \cite{KOT}] \label{Log}  For all $u\in H^{s}(\mathbb{R}^d) $
with $ s>1+d/2$, there exists a constant $C$ such that the following
estimate holds
\begin{equation}
\|\nabla v\|_{L^\infty}\leq
C(1+\|\dv v\|_{L^\infty}+\|\cl v\|_{L^\infty})(1+\log(e+\|v\|_{H^s})).\label{log}
\end{equation}
\end{Lemma}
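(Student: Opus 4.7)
The plan is a standard Littlewood-Paley argument with an optimization in the cutoff frequency. Write $v = \Delta_{-1}v + \sum_{j\geq 0}\Delta_j v$ in terms of dyadic frequency blocks, and for a parameter $N\in\mathbb{N}$ to be chosen, split
$$\|\nabla v\|_{L^\infty} \leq \|\nabla\Delta_{-1}v\|_{L^\infty} + \sum_{0\leq j\leq N}\|\nabla\Delta_j v\|_{L^\infty} + \sum_{j>N}\|\nabla\Delta_j v\|_{L^\infty}.$$
The high-frequency tail is the easy piece: by Bernstein's inequality $\|\nabla\Delta_j v\|_{L^\infty}\leq C 2^{j(1+d/2)}\|\Delta_j v\|_{L^2}\leq C 2^{j(1+d/2-s)}\|v\|_{H^s}$, and since $s>1+d/2$ the geometric series summed over $j>N$ is at most $C 2^{-N(s-1-d/2)}\|v\|_{H^s}$, which will be made $O(1)$ by taking $N$ proportional to $\log(e+\|v\|_{H^s})$.

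The real content of the lemma lies in the middle block. Starting from the vector identity $-\Delta v=\nabla(\dv v)-\cl(\cl v)$ (with an analogous identity using the scalar curl in two dimensions), one has the formal representation $\nabla v=\nabla(-\Delta)^{-1}[\nabla(\dv v)-\cl(\cl v)]$. For each $j\geq 0$, the composite operator $\Delta_j\nabla(-\Delta)^{-1}\nabla$ is a Fourier multiplier supported in the dyadic annulus $|\xi|\sim 2^j$; after rescaling, it is convolution with a fixed Schwartz function, so its $L^\infty\!\to\!L^\infty$ operator norm is bounded uniformly in $j$. This yields
$$\|\nabla\Delta_j v\|_{L^\infty}\leq C\bigl(\|\dv v\|_{L^\infty}+\|\cl v\|_{L^\infty}\bigr)\qquad(j\geq 0),$$
and summing the $N+1$ middle scales contributes $C(N+1)(\|\dv v\|_{L^\infty}+\|\cl v\|_{L^\infty})$.

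Combining the three pieces and choosing $N\sim\log(e+\|v\|_{H^s})$ produces the claimed inequality; the low-frequency term $\|\nabla\Delta_{-1}v\|_{L^\infty}$ is controlled by Bernstein ($\leq C\|v\|_{L^2}\leq C\|v\|_{H^s}$) and absorbed into the final logarithmic factor after adjusting constants. The main technical obstacle is precisely the uniform-in-$j$ $L^\infty$ boundedness of the middle-frequency operator: because the Riesz transforms themselves are \emph{not} bounded on $L^\infty$, one cannot simply write $\nabla v$ as Riesz transforms applied to $\dv v$ and $\cl v$; the key point is that the dyadic frequency cutoff smooths the singular symbol to a fixed Schwartz kernel whose $L^1$ norm furnishes the uniform bound. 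Since the inequality is due to Kozono, Ogawa, and Taniuchi, I would invoke \cite{KOT} for the detailed kernel analysis rather than reproducing it here.
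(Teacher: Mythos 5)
The paper offers no proof of this lemma --- it is quoted verbatim from \cite{KOT} --- so your sketch must be judged on its own. Your treatment of the high-frequency tail and of the middle dyadic blocks is correct and is the standard argument: the uniform-in-$j$ $L^\infty$ bound for $\Delta_j\nabla(-\Delta)^{-1}\nabla$ on a dyadic annulus is indeed the heart of the matter, and your explanation of why the annulus cutoff rescues the failure of the Riesz transforms on $L^\infty$ is exactly right.

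There is, however, a genuine gap in the low-frequency piece. Bernstein gives $\|\nabla\Delta_{-1}v\|_{L^\infty}\leq C\|v\|_{L^2}$, but this is \emph{not} ``absorbed into the final logarithmic factor'': $\|v\|_{L^2}$ is in general not dominated by $C(1+\|\dv v\|_{L^\infty}+\|\cl v\|_{L^\infty})(1+\log(e+\|v\|_{H^s}))$. Take $v(x)=\phi(x/R)$ for a fixed $\phi$ with $\dv\phi\not\equiv0$ and let $R\to\infty$: then $\|v\|_{L^2}=R^{d/2}\|\phi\|_{L^2}$ grows polynomially, while $\|\dv v\|_{L^\infty}+\|\cl v\|_{L^\infty}=O(R^{-1})$ and $\log(e+\|v\|_{H^s})=O(\log R)$, so the right-hand side grows only logarithmically. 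The standard repair --- and what \cite{KOT} actually do --- is to use the \emph{homogeneous} decomposition $\nabla v=\sum_{j\in\mathbb{Z}}\dot\Delta_j\nabla v$ with a second cutoff $-N'$: for every $j\in\mathbb{Z}$, negative included, the operator $\dot\Delta_j\nabla(-\Delta)^{-1}\nabla$ is localized to an annulus and enjoys the same uniform $L^\infty$ bound by scaling, so the blocks $-N'\leq j\leq N$ contribute $C(N+N'+1)(\|\dv v\|_{L^\infty}+\|\cl v\|_{L^\infty})$, while the very low blocks $j<-N'$ satisfy $\|\dot\Delta_j\nabla v\|_{L^\infty}\leq C2^{j(1+d/2)}\|v\|_{L^2}$ and sum geometrically to $C2^{-N'(1+d/2)}\|v\|_{H^s}$, which the same logarithmic choice $N'\sim\log(e+\|v\|_{H^s})$ makes $O(1)$. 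Your single inhomogeneous block $\Delta_{-1}$ lumps all low frequencies together, where the symbol $\chi(\xi)\,\xi\otimes\xi/|\xi|^2$ is singular at the origin and its kernel is no longer integrable; that is precisely why that piece cannot be routed through the div--curl identity and why the two-sided cutoff is needed.
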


 \begin{Lemma}[Gagliardo-Nirenberg inequality \cite{Ga,Ni}]\label{GN}
Let $v\in W^{k,r}(\mathbb{R}^d)\cap L^q(\mathbb{R}^d)$, $1\leq q,r
\leq \infty$. Then  the following inequalities hold
\begin{equation}\label{gn}
  \|D^iv\|_{L^p}\leq M_0 \|D^k v\|^\alpha_{L^r}\|v\|^{1-\alpha}_{L^q}, \ \ \ \forall \,\, 0\leq i<k,
\end{equation}
where
\begin{align*}
    \frac{1}{p}=\frac{i}{N}+\alpha\Big(\frac{1}{r}-\frac{k}{d}\Big)+(1-\alpha)\frac{1}{q},
\end{align*}
for all $\alpha$ in the interval
\begin{align*}
   \frac{i}{k}\leq \alpha \leq 1.
\end{align*}
The constant $M_0$ depending only on $d,m,j,q,r$ and $\alpha$, with
the following exceptional case:
\begin{enumerate}
  \item  If\, $i=0$, $rk<d,q=\infty$ then we make the additional assumption that either $v$ tends to zero
  at infinity or $v\in L^{\tilde q}(\mathbb{R}^d)$ for some finite $\tilde q>0$.
  \item \,If $1<r<\infty$, and $k-i-d/r$ is a non negative integer then \eqref{gn} holds  only for $\alpha$ satisfying
  $i/k\leq \alpha <1$.
\end{enumerate}
\end{Lemma}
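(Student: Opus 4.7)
The plan is to follow Nirenberg's classical interpolation argument. First, I would fix the exponent relation by a scaling heuristic: applying the proposed inequality to $v_\lambda(x) = v(\lambda x)$ and demanding that both sides scale identically in $\lambda$ forces exactly the stated algebraic relation between $p$, $q$, $r$, $i$, $k$, $d$, and $\alpha$. This both motivates the formula and already exhibits why the two exceptional cases listed in the lemma must be treated separately: they correspond precisely to the borderline regimes where the interpolation endpoint becomes singular.

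Next I would prove the base case $i=0$, $k=1$, i.e.\ $\|v\|_{L^p} \le C \|\nabla v\|_{L^r}^\alpha \|v\|_{L^q}^{1-\alpha}$. In one space dimension this follows from the fundamental theorem of calculus applied to $|v|^{s}$ for a judicious exponent $s=s(\alpha,p,q,r)$, combined with H\"older's inequality. In dimension $d>1$ one slices along the coordinate axes and combines the one-dimensional estimates via Fubini and the Loomis--Whitney inequality; equivalently, one uses the Sobolev embedding $W^{1,1}(\mathbb{R}^d) \hookrightarrow L^{d/(d-1)}(\mathbb{R}^d)$ applied to a suitable power of $v$ and then interpolated against the $L^q$ norm.

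To ascend from $k=1$ to general $k$ at the level $i=0$, I would iterate the first-order inequality, applying it successively to $v,\nabla v,\dots,\nabla^{k-1}v$, at each stage choosing the interpolation exponent so that the lower-order $L^q$ piece is preserved. For intermediate derivatives $0<i<k$ I would then use the standard interpolation inequality for derivatives, obtained by integrating by parts in $\int |D^i v|^p\,dx$ and applying H\"older, and combine it with the $i=0$ case in a short induction on $i$ to produce the full statement.

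The main obstacle is not any single analytic step but the bookkeeping required to keep the conditions on $p,q,r,\alpha,i,k$ simultaneously consistent throughout the iteration, and in particular to carve out precisely the two degenerate regimes recorded in (1) and (2). Case (1), $rk<d$ with $q=\infty$, needs the auxiliary decay/integrability assumption on $v$ to ensure the truncation/approximation argument converges, since $L^\infty$ is not approximable by compactly supported functions. Case (2), when $k-i-d/r$ is a nonnegative integer and $1<r<\infty$, forces $\alpha<1$ because the endpoint $\alpha=1$ would coincide with a Sobolev embedding into $L^\infty$ that fails in general; this is circumvented either by a BMO/John--Nirenberg replacement at the endpoint or by a direct truncation argument. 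These borderline checks are where the only real care is required; the rest of the proof is a bootstrap from the first-order Sobolev-Gagliardo inequality.
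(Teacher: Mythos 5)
This statement is quoted in the paper as a classical preliminary (Lemma~2.2) with a citation to Gagliardo and Nirenberg; the paper supplies no proof of its own, so there is nothing internal to compare your argument against. Your outline is, in substance, the standard Nirenberg proof: scaling to pin down the exponent relation, the first-order case $i=0$, $k=1$ via the one-dimensional fundamental-theorem-of-calculus estimate sliced along coordinate directions (equivalently $W^{1,1}\hookrightarrow L^{d/(d-1)}$ applied to a power of $v$), iteration in $k$, and the intermediate-derivative interpolation obtained by integration by parts. The identification of the two exceptional regimes is also correct: case (1) is about approximability/decay when $q=\infty$, and case (2) is the failure of the endpoint embedding $W^{k-i,r}\hookrightarrow L^{\infty}$ when $k-i-d/r$ is a nonnegative integer and $1<r<\infty$.

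Two caveats on the sketch itself. First, the scaling heuristic only shows the exponent relation is \emph{necessary}; it does not, as you suggest, ``exhibit'' the exceptional cases, which arise from endpoint embedding failures rather than from dimensional analysis. Second, the step you dismiss as ``bookkeeping'' is where most of the real work sits: obtaining the full range $i/k\le\alpha\le 1$ (not just the Sobolev endpoint value of $\alpha$) requires combining the embedding with a genuine interpolation inequality for intermediate derivatives, and the integration-by-parts proof of $\|D^{j}v\|_{L^{2p}}^{2}\le C\|D^{j+1}v\|_{L^{p}}\|D^{j-1}v\|_{L^{p}}$ needs care for general $p$ because $|D^{j}v|^{p-2}D^{j}v$ is not smooth. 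As a roadmap your proposal is sound; as a proof it defers precisely the delicate parts. For the purposes of this paper, citing \cite{Ga,Ni} is the intended treatment.
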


We define the   operator $\Lambda:=(-\Delta)^{1/2}$   via the
Fourier transform
 $$
\widehat{\Lambda f}(\xi)=|\xi|\hat f (\xi).
$$
Generally, we define $\Lambda^s f$ for $s\in \mathbb{R}$ as
$$
\widehat{\Lambda^s f}(\xi)=|\xi|^{s}\hat f (\xi).
$$
For $s\in \mathbb{R}$, we define
$$
\|f\|_{\dot{H}^s}: =\|\Lambda^s f\|_{L^2}=\Big(\int_{\mathbb{R}^3}
|\xi|^{2s}|\hat f(\xi)|^2 {\,d}\xi\Big)^{1/2}
$$
and the homogeneous Sobolev space $\dot{H}^s(\mathbb{R}^3):=\{f\in
\mathcal{S}'(\mathbb{R}^3): \|f\|_{\dot{H}^s}<\infty\}.$ Similar,
the Sobolev space $H^{s,p}(\mathbb{R}^3)$ is equipped with the norm
$$
\|f\|_{\dot{H}^{s,p}}:=\|\Lambda ^s f\|_{L^p}.
$$

Now we recall the following bilinear commutator and the product
estimates.
\begin{Lemma} [\!\cite{KP}] \label{Bilin}
Let $s>0,1<p<\infty$. Assume that $f,g\in
\dot{H}^{s,p}(\mathbb{R}^d)$, then there exists a constant $C$,
independent of $f$ and $g$, such that,
\begin{align}
&\|\Lambda^s(f g)-f\Lambda^sg\|_{L^p}\leq C(\|\nabla
f\|_{L^{p_1}}\|\Lambda^{s-1}g\|_{L^{q_1}}+\|\Lambda^sf\|_{L^{p_2}}
\|g\|_{L^{q_2}}),\label{kpa}\\
&\|\Lambda^s(f g)\|_{L^p}\leq
C(\|f\|_{L^{p_1}}\|\Lambda^sg\|_{L^{q_1}}+\|\Lambda^sf\|_{L^{p_2}}
\|g\|_{L^{q_2}}),\label{kpb}
\end{align}
where $p_1, p_2\in (1,\infty]$ satisfies  $$\frac{1}{p}=\frac{1}{p_1}+\frac{1}{q_1}
=\frac{1}{p_2}+\frac{1}{q_2}.$$
\end{Lemma}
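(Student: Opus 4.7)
The plan is to prove both inequalities \eqref{kpa} and \eqref{kpb} via Littlewood--Paley analysis and Bony's paraproduct decomposition. First I would fix a standard dyadic partition of unity, introduce the frequency projectors $\Delta_j$ and the low-frequency cutoffs $S_j = \sum_{k\le j-1}\Delta_k$, and record the tools I will need: Bernstein-type inequalities ($\|\Lambda^s \Delta_j h\|_{L^p}\simeq 2^{js}\|\Delta_j h\|_{L^p}$ for $j\in\mathbb Z$ and $p\in[1,\infty]$), the Fefferman--Stein / Littlewood--Paley characterization of $\dot H^{s,p}$ via the square function for $1<p<\infty$, and the $L^p$-boundedness of multipliers whose symbols are smooth away from the origin and supported in dyadic annuli.

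Next I would apply Bony's decomposition
\[
fg \;=\; T_f g + T_g f + R(f,g),\qquad T_f g:=\sum_{j} S_{j-2}f\,\Delta_j g,\quad R(f,g):=\sum_{|j-k|\le 1}\Delta_j f\,\Delta_k g,
\]
and treat each piece separately. For the product bound \eqref{kpb}, the paraproduct $\Lambda^s T_f g$ equals $\sum_j S_{j-2}f\,\Lambda^s \Delta_j g$ up to frequency-localized errors; since each summand is supported in an annulus of size $\sim 2^j$, the square-function characterization, together with the pointwise bound $|S_{j-2}f(x)|\le M f(x)$ (maximal function) and the vector-valued Fefferman--Stein inequality in $L^p$, yields $\|T_f g\|_{\dot H^{s,p}}\lesssim \|f\|_{L^{p_1}}\|\Lambda^s g\|_{L^{q_1}}$. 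The symmetric term $T_g f$ is handled identically, and $R(f,g)$, which requires $s>0$ for summability, is estimated by redistributing $\Lambda^s$ onto either factor.

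For the commutator bound \eqref{kpa} I would write
\[
\Lambda^s(fg)-f\Lambda^s g \;=\; [\Lambda^s, T_f] g \;+\; \Lambda^s(T_g f) \;+\; \Lambda^s R(f,g) \;-\; T_{\Lambda^s g} f \;-\; R(f,\Lambda^s g).
\]
The last four terms lose no derivatives from $f$ and are controlled by $\|\Lambda^s f\|_{L^{p_2}}\|g\|_{L^{q_2}}$ exactly as above. The essential new ingredient is the paraproduct commutator $[\Lambda^s,T_f]g$. For each $j$ the symbol of $[\Lambda^s, S_{j-2}f]\Delta_j$ is concentrated where $|\xi|\sim 2^j$ while $S_{j-2}f$ is spectrally supported in $|\eta|\ll 2^j$; writing the commutator as a convolution with the kernel
\[
\bigl(|\xi+\eta|^s-|\xi|^s\bigr)\widehat{S_{j-2}f}(\eta)\widehat{\Delta_j g}(\xi),
\]
a first-order Taylor expansion in $\eta$ around $0$ gains a derivative on $f$ at the cost of one derivative on $g$, i.e. heuristically
\[
[\Lambda^s,S_{j-2}f]\Delta_j g \;\sim\; s\,(S_{j-2}\nabla f)\cdot \Lambda^{s-1}\nabla \Delta_j g / |\nabla|\,,
\]
with a remainder that is absolutely summable in $j$. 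Reassembling via the square function and Fefferman--Stein then gives the desired bound $\|\nabla f\|_{L^{p_1}}\|\Lambda^{s-1}g\|_{L^{q_1}}$.

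The main obstacle is precisely this Taylor-expansion / remainder analysis of the paraproduct commutator: one must verify that the symbol $m_j(\xi,\eta):=(|\xi+\eta|^s-|\xi|^s-s|\xi|^{s-2}\xi\cdot\eta)\varphi(\xi/2^j)\psi(\eta/2^{j-2})$ defines, after appropriate $2^{-j}$ rescaling, a uniformly bounded family of bilinear Calder\'on--Zygmund multipliers, so that the Coifman--Meyer theorem (or a direct Schur-type kernel estimate) applies with constants independent of $j$. Once this is in place, summation in $j$ via the square function closes the argument for the full range $1<p<\infty$ and $\tfrac1p=\tfrac1{p_i}+\tfrac1{q_i}$, $p_i\in(1,\infty]$, giving both \eqref{kpa} and \eqref{kpb}.
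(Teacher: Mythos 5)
The paper offers no proof of this lemma at all: it is quoted verbatim from Kato--Ponce \cite{KP} (with the Sobolev/H\"older-exponent refinement that is usually attributed to Kenig--Ponce--Vega and later authors), so there is nothing internal to compare your argument against. Your outline is the standard modern Littlewood--Paley route and is correct in its architecture: the Bony splitting, the identity $\Lambda^s(fg)-f\Lambda^s g=[\Lambda^s,T_f]g+\Lambda^s(T_gf)+\Lambda^s R(f,g)-T_{\Lambda^s g}f-R(f,\Lambda^s g)$, the use of $|S_{j-2}f|\lesssim Mf$ with the vector-valued Fefferman--Stein inequality, the observation that $s>0$ is what makes the remainder $R(f,g)$ summable, and the identification of $[\Lambda^s,T_f]g$ as the only place where a derivative must be transferred from $g$ to $f$ are all exactly right. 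The original proof in \cite{KP} is different in flavor: it works directly with the full bilinear symbol of the commutator (no dyadic decomposition), splits the $(\xi,\eta)$-plane into the two regions $|\eta|\le|\xi|/2$ and $|\eta|>|\xi|/2$, Taylor-expands $|\xi+\eta|^s-|\eta|^s$ in the first region, and invokes the Coifman--Meyer multilinear multiplier theorem once for each piece; your paraproduct version is essentially a dyadic rewriting of the same idea, at the cost of more bookkeeping but with the benefit of making the mixed-exponent generalization and the role of $s>0$ transparent. The one caveat is that your ``main obstacle'' --- the uniform (in $j$) Coifman--Meyer bound for the rescaled commutator symbols $m_j$ --- is flagged but not executed, and it genuinely is where all the work lives; as written your text is a correct roadmap rather than a proof. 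Two small points to tighten if you carry it out: the term $T_{\Lambda^s g}f$ needs $s>0$ to sum the geometric series in $|S_{j-2}\Lambda^s g|\lesssim 2^{js}Mg$, and the endpoint $p_i=\infty$ should be handled by pulling the $L^\infty$ factor out before applying Fefferman--Stein (whose outer exponent is the finite $p$, so this is harmless).
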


\begin{Lemma}[\!\cite{Tr}]  Let $s>0, 1<p<\infty$, and $  f\in \dot{H}^{s,p}(\mathbb{R}^d)\cap
L^\infty(\mathbb{R}^d)$. Assume that $F(\cdot)$ is a smooth function
on $\mathbb{R}$ with $F(0)=0$. Then we have
\begin{equation}
\|F(f)\|_{\dot{H}^{s,p}}\leq
C(M)(1+\|f\|_{L^\infty})^{[s]+1}\|f\|_{\dot{H}^{s,p}},\label{biln}
\end{equation}
where  the constant  $C(M)$  depends on
$M:=\sup\limits_{k\leq[s]+2,|t|\leq\|f\|_{L^\infty}}\|F^{(k)}(t)\|_{L^\infty}$.
\end{Lemma}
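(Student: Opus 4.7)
The plan is to prove this composition estimate via Littlewood--Paley decomposition, which is essentially the approach used in Triebel's monograph. I would split into cases according to whether $s$ is an integer. For $s = k \in \mathbb{N}$, the chain rule (or Fa\`a di Bruno's formula) writes $D^k F(f)$ as a finite linear combination of terms of the form
$$F^{(\ell)}(f) \prod_{i=1}^{\ell} D^{k_i} f, \qquad 1 \le \ell \le k, \qquad \sum_{i=1}^\ell k_i = k.$$
Since $|f| \le \|f\|_{L^\infty}$, each factor $F^{(\ell)}(f)$ is bounded in $L^\infty$ by $M$, and each derivative factor is controlled by the Gagliardo--Nirenberg interpolation (Lemma \ref{GN})
$$\|D^{k_i} f\|_{L^{p_i}} \le C\, \|D^k f\|_{L^p}^{k_i/k} \, \|f\|_{L^\infty}^{1 - k_i/k},$$
with the $p_i$ chosen so that $\sum 1/p_i = 1/p$. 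H\"older's inequality then closes the estimate with the claimed constant for integer $s$.

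For non-integer $s$, I would pass to the Littlewood--Paley characterization of $\dot H^{s,p}$. Writing $S_j$ for the low-frequency projector up to scale $2^j$ and $\Delta_j := S_{j+1} - S_j$, the telescoping identity
$$F(f) \;=\; \sum_{j \ge -1} \bigl[F(S_{j+1} f) - F(S_j f)\bigr] \;=\; \sum_{j} \Delta_j f \cdot m_j(f), \qquad m_j(f) := \int_0^1 F'\bigl(S_j f + \tau \Delta_j f\bigr)\, d\tau,$$
reduces the problem to estimating a sum of products. The multiplier $m_j(f)$ is uniformly bounded in $L^\infty$ by $M$, and its derivatives up to order $[s]+1$ --- needed for Bernstein-type frequency localization --- involve $F^{(k)}$ for $k \le [s]+2$ evaluated on $\{|t| \le \|f\|_{L^\infty}\}$ and products of derivatives of $f$, which is precisely the $C(M)$ in the conclusion. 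I would then apply Bony's paraproduct decomposition $\Delta_j f \cdot m_j = T_{m_j}(\Delta_j f) + T_{\Delta_j f}(m_j) + R(m_j, \Delta_j f)$ and use the boundedness of $T_a : \dot H^{s,p} \to \dot H^{s,p}$ for $a \in L^\infty$, summing the $j$'s via the square-function characterization of $\dot H^{s,p}$.

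The hard part will be tracking the exponent $(1+\|f\|_{L^\infty})^{[s]+1}$ in the non-integer case. Each time one differentiates $m_j(f)$ to gain smoothness of the symbol, the chain rule produces a factor of $\nabla f$ which is then traded (via Gagliardo--Nirenberg) for one power of $\|f\|_{L^\infty}$ together with a fraction of $\|f\|_{\dot H^{s,p}}$; exactly $[s]+1$ such redistributions are needed to reach the $\dot H^{s,p}$-level, matching the stated power. An alternative route is real interpolation between the integer endpoints $s = [s]$ and $s = [s]+1$ (where the chain-rule proof above applies), but one must isolate the $\|f\|_{L^\infty}$-dependence carefully because the map $f \mapsto F(f)$ is nonlinear and standard interpolation theorems apply only to linear operators.
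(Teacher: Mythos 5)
The paper does not prove this lemma at all: it is quoted verbatim from Triebel's monograph \cite{Tr} and used as a black box (e.g.\ to get $\|\Delta\kappa(\theta)\|_{L^p}\leq C\|\Delta\theta\|_{L^p}$), so there is no in-paper argument to compare against. Your sketch is the standard proof of this Moser--Meyer composition estimate and is correct in outline. The integer case via Fa\`a di Bruno plus the Gagliardo--Nirenberg interpolation $\|D^{k_i}f\|_{L^{pk/k_i}}\leq C\|D^kf\|_{L^p}^{k_i/k}\|f\|_{L^\infty}^{1-k_i/k}$ closes cleanly and in fact yields the power $\|f\|_{L^\infty}^{\ell-1}$ with $\ell\leq[s]$, which is dominated by the stated $(1+\|f\|_{L^\infty})^{[s]+1}$. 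For non-integer $s$ the telescoping (Meyer) linearization $F(f)=\sum_j\Delta_jf\cdot m_j(f)$ is the right device, and you correctly identify that the whole constant bookkeeping lives in the symbol bounds $\|D^\alpha m_j\|_{L^\infty}\leq C(M)(1+\|f\|_{L^\infty})^{|\alpha|}2^{j|\alpha|}$ obtained from Bernstein applied to $S_jf$ and $\Delta_jf$. Two points deserve care if you write this out in full: first, the contribution where $m_j$ carries the high frequency (your $T_{\Delta_jf}(m_j)$ and remainder terms) is \emph{not} handled by $L^\infty$-boundedness of the paraproduct alone --- it is exactly here that the derivative bounds on $m_j$ up to order $[s]+1$ and the restriction $s>0$ enter, via a summation of the form $\sum_{j\leq q}2^{(j-q)([s]+1-s)}$; second, your closing alternative of interpolating between integer exponents should be dropped, for the reason you yourself note --- $f\mapsto F(f)$ is nonlinear, so linear interpolation theorems do not apply. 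With the interpolation remark removed and the high-frequency-symbol estimate carried out, your argument is a complete and correct proof of the cited lemma.
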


Finally, we state  the local/global well-posedness  results on the problems \eqref{baa}-\eqref{bad}, \eqref{bai}-\eqref{bal}, and
 \eqref{bao}-\eqref{bar}.

\begin{Proposition}\label{le2.5}
Let $\kappa(\theta)\equiv1$ and $\mu(\theta)$ satisfy \eqref{bae} and $u_0,\theta_0\in H^2(\mathbb{R}^3)$ with $\dv u_0=0$ in $\mathbb{R}^3$. Then the problem \eqref{baa}-\eqref{bad} has a unique local strong solution $(u,\theta)$ satisfying
\begin{equation}
(u,\theta)\in L^\infty(0,T;H^2(\mathbb{R}^3))\cap L^2(0,T;H^3(\mathbb{R}^3))\label{2.6}
\end{equation}
for some positive constant $T>0$.
\end{Proposition}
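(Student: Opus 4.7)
The plan is to construct the local strong solution via a standard Picard iteration, establish uniform bounds in the space $X_T:=L^\infty(0,T;H^2(\mathbb{R}^3))\cap L^2(0,T;H^3(\mathbb{R}^3))$ on a short time interval, and pass to the limit using a Cauchy argument in a weaker norm. Setting $(u^0,\theta^0):=(u_0,\theta_0)$, I would define $(u^{n+1},\theta^{n+1},\pi^{n+1})$ for $n\geq 0$ as the unique solution of the linear problems
\begin{align*}
&\partial_t\theta^{n+1}+u^n\cdot\nabla\theta^{n+1}-\Delta\theta^{n+1}=0,\\
&\partial_tu^{n+1}+u^n\cdot\nabla u^{n+1}-\dv(\mu(\theta^n)(\nabla u^{n+1}+(\nabla u^{n+1})^{\mathrm{T}}))+\nabla\pi^{n+1}=\theta^n e_3,\\
&\dv u^{n+1}=0,
\end{align*}
with common initial data $(u_0,\theta_0)$. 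At each step this amounts to solving a linear transport--diffusion equation for $\theta^{n+1}$ and a non-stationary Stokes system with smooth, uniformly elliptic variable viscosity $\mu(\theta^n)$ for $u^{n+1}$, both of which admit unique solutions by classical linear theory once $u^n$ and $\mu(\theta^n)$ are sufficiently regular.

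Next, I would derive uniform $X_{T_*}$ bounds on the iterates for a short enough $T_*>0$. Setting $\Phi^n(t):=\|u^n(t)\|_{H^2}^2+\|\theta^n(t)\|_{H^2}^2$, a standard $L^2$ energy estimate combined with an $H^2$-level estimate (obtained by applying $\Lambda^2$ and testing against $\Lambda^2 u^{n+1}$ and $\Lambda^2\theta^{n+1}$) should yield, after using the maximum principle for $\theta^{n+1}$ to control $\|\theta^{n+1}\|_{L^\infty}$ and hence $\|\mu(\theta^{n+1})\|_{L^\infty}$, an inequality of the form
\begin{equation*}
\frac{d}{dt}\Phi^{n+1}+c\bigl(\|\nabla u^{n+1}\|_{H^2}^2+\|\nabla\theta^{n+1}\|_{H^2}^2\bigr)\leq CP(\Phi^n)(1+\Phi^{n+1})
\end{equation*}
for some polynomial $P$. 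The commutator terms produced by $\Lambda^2$ acting across the transport term $u^n\cdot\nabla$ and across the variable-viscosity operator are handled by the Kato--Ponce inequalities (\ref{kpa})--(\ref{kpb}) together with the composition estimate (\ref{biln}) applied with $F=\mu$. A Gronwall argument then produces a uniform bound $\Phi^n(t)\leq M$ on $[0,T_*]$ depending only on $\|(u_0,\theta_0)\|_{H^2}$.

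For convergence and uniqueness, I would consider the differences $\delta u^{n+1}:=u^{n+1}-u^n$ and $\delta\theta^{n+1}:=\theta^{n+1}-\theta^n$, which solve a linear system with source terms bilinear in the previously-bounded iterates. An $L^2$-energy estimate exploiting the divergence-free condition and the uniform $L^\infty$ bounds on $\mu(\theta^n)$ should give
\begin{equation*}
\sup_{[0,T_*]}\bigl(\|\delta u^{n+1}\|_{L^2}^2+\|\delta\theta^{n+1}\|_{L^2}^2\bigr)\leq \eta\sup_{[0,T_*]}\bigl(\|\delta u^n\|_{L^2}^2+\|\delta\theta^n\|_{L^2}^2\bigr)
\end{equation*}
with some $\eta<1$ after possibly further shrinking $T_*$. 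This yields convergence in $L^\infty(0,T_*;L^2)\cap L^2(0,T_*;H^1)$; interpolating with the uniform $X_{T_*}$ bound and passing to the limit produces a strong solution in $X_{T_*}$, while the same difference estimate furnishes uniqueness.

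The hard part is the $H^2$-level Stokes estimate with variable viscosity: the spatial operator is no longer the standard Stokes operator, and the additional terms involving $\nabla\mu(\theta^n)=\mu'(\theta^n)\nabla\theta^n$ and $\nabla^2\mu(\theta^n)$ must be absorbed at the correct order. The key observation is that the Sobolev embedding $H^2(\mathbb{R}^3)\hookrightarrow W^{1,6}\cap L^\infty$, combined with (\ref{biln}), guarantees that the composite $\mu(\theta^n)$ inherits an $L^\infty(0,T_*;H^2)$ bound controlled by $\Phi^n$, which is precisely what is needed to close the nonlinear iteration.
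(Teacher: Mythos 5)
Your approach is sound, but it takes a genuinely different route from the paper's. The paper disposes of this proposition in one line: it invokes the standard Galerkin method and observes that the only substantive ingredient, the a priori estimate \eqref{2.6}, is obtained by the same chain of energy estimates carried out in Section 3 (maximum principle for $\theta$, $L^2$ energy bounds, the $L^4$ estimate of $\nabla\theta$, then the $H^1$ and $H^2$ estimates of $u$ and $\theta$), closed on a short time interval without needing either hypothesis \eqref{baf} or \eqref{bag}. You instead construct the solution by Picard iteration on the linearized transport--diffusion and variable-viscosity Stokes problems and close via a contraction in $L^\infty(0,T_*;L^2)\cap L^2(0,T_*;H^1)$; this buys you uniqueness for free, whereas Galerkin requires a separate (though identical in spirit) $L^2$ difference estimate, at the price of having to justify solvability of the linear variable-coefficient Stokes system at each step. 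One caveat on your uniform bound: the inequality $\frac{d}{dt}\Phi^{n+1}+c(\cdots)\leq CP(\Phi^n)(1+\Phi^{n+1})$ is slightly too clean, and your closing ``key observation'' that an $L^\infty(0,T_*;H^2)$ bound on $\mu(\theta^n)$ suffices is not quite enough. The commutator terms coming from $\nabla\mu(\theta^n)=\mu'(\theta^n)\nabla\theta^n$ involve $\|\nabla\theta^n\|_{L^\infty}$, which in $\mathbb{R}^3$ is not controlled by $\|\theta^n\|_{H^2}$ pointwise in time since $H^2(\mathbb{R}^3)\not\hookrightarrow W^{1,\infty}$; exactly as in the paper's estimate leading to \eqref{bcg}, these factors must be bounded via $\|\nabla\theta^n\|_{L^\infty}^2\leq C\|\theta^n\|_{H^2}\|\theta^n\|_{H^3}$ and absorbed using the $L^2(0,T_*;H^3)$ component of the inductive bound, so the right-hand side of your differential inequality necessarily contains the dissipation of step $n$ as well as $P(\Phi^n)$. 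This is a routine adjustment (the resulting Gronwall factor is still uniformly controlled, and small for small $T_*$) and does not affect the conclusion.
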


\begin{proof}
We can prove it by the standard Galerkin method. Since the key step is to give a priori estimates \eqref{2.6},
which are very similar to that of our proofs on the regularity criteria, and thus we omit the details here.
\end{proof}

Similarly, we can obtain the following  proposition.

\begin{Proposition}\label{le2.6}
Let $\mu(\theta)$ and $\kappa(\theta)$ satisfy \eqref{bae}. Let $u_0,\theta_0\in H^2(\mathbb{R}^3)$ with $\dv u_0=0$ in $\mathbb{R}^3$.
 Then  the problem \eqref{baa}-\eqref{bad} has a unique local strong solution $(u,\theta)$ satisfying \eqref{2.6} for some positive constant $T>0$.
\end{Proposition}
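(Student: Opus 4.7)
The plan is to prove the proposition by a Faedo--Galerkin approximation, following the same outline as Proposition~\ref{le2.5}. The essential new feature is that the temperature equation now carries the nonlinear diffusion $\dv(\kappa(\theta)\nabla\theta)$ instead of $\Delta\theta$, and the task is to explain how the \emph{a priori} bound (\ref{2.6}) still closes in this setting. First, I would construct Galerkin approximations $(u_n,\theta_n)$ by projecting (\ref{baa}) onto the first $n$ Stokes eigenfunctions and (\ref{bac}) onto the first $n$ eigenfunctions of the Laplacian, obtaining a system of ODEs solvable locally in time by Picard's theorem. A maximum-principle argument applied to (\ref{bac}) --- valid because $\dv u_n=0$ and the equation is source-free --- gives $\|\theta_n(t)\|_{L^\infty}\le\|\theta_0\|_{L^\infty}$. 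Since $\theta_0\in H^2(\mathbb{R}^3)\hookrightarrow L^\infty(\mathbb{R}^3)$, one may enlarge $C_2$ so that $\|\theta_0\|_{L^\infty}\le C_2$, and (\ref{bae}) then provides uniform ellipticity $C_1^{-1}\le\mu(\theta_n),\kappa(\theta_n)\le C_1$ for all time. The basic $L^2$ energy identity for $\theta_n$ followed by the coupled energy estimate for $u_n$ produces bounds in $L^\infty_t L^2_x\cap L^2_t H^1_x$.

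For the $H^2$-level bound I would apply $\Delta$ to each equation, test against $\Delta u_n$ and $\Delta\theta_n$, and reduce matters to estimating the commutator terms
\[
\bigl[\Delta,\mu(\theta_n)\bigr](\nabla u_n+\nabla u_n^{\mathrm T})\quad\text{and}\quad\bigl[\Delta,\kappa(\theta_n)\bigr]\nabla\theta_n.
\]
These are controlled by combining the Kato--Ponce commutator estimate \eqref{kpa}, the composition inequality \eqref{biln} (applied to $\mu-\mu(0)$ and $\kappa-\kappa(0)$ to satisfy the $F(0)=0$ hypothesis) to bound $\|\mu(\theta_n)\|_{\dot H^{2,p}}$ and $\|\kappa(\theta_n)\|_{\dot H^{2,p}}$ by $(1+\|\theta_n\|_{L^\infty})^{3}\|\theta_n\|_{\dot H^{2,p}}$, and Gagliardo--Nirenberg interpolation in three dimensions. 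After using Young's inequality to absorb the top-order pieces into the uniformly elliptic principal parts, one arrives at
\[
\frac{d}{dt}\bigl(\|u_n\|_{H^2}^2+\|\theta_n\|_{H^2}^2\bigr)+\frac{1}{2C_1}\bigl(\|u_n\|_{H^3}^2+\|\theta_n\|_{H^3}^2\bigr)\le C\bigl(1+\|u_n\|_{H^2}^2+\|\theta_n\|_{H^2}^2\bigr)^{M}
\]
for some $M>0$, which by an ODE comparison yields a uniform bound on $[0,T]$ for some $T>0$ independent of $n$.

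Standard Aubin--Lions compactness then delivers strong convergence of a subsequence in $L^2(0,T;H^{2-\delta}(\mathbb{R}^3))$, enough to pass to the limit in every nonlinear term, including $\kappa(\theta_n)\nabla\theta_n$ and $\mu(\theta_n)(\nabla u_n+\nabla u_n^{\mathrm T})$. Uniqueness follows from an $L^2\times L^2$ energy estimate on the difference of two solutions, controlling $\dv\bigl((\mu(\theta_1)-\mu(\theta_2))(\nabla u_2+\nabla u_2^{\mathrm T})\bigr)$ and its thermal analogue through the Lipschitz bounds $|\mu(\theta_1)-\mu(\theta_2)|+|\kappa(\theta_1)-\kappa(\theta_2)|\le C|\theta_1-\theta_2|$ together with the $H^2$ regularity of $u_2,\theta_2$ already in hand, yielding a Gr\"onwall inequality. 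The principal obstacle is the $H^2$ estimate for $\theta_n$: in Proposition~\ref{le2.5} the choice $\kappa\equiv 1$ reduced this step to a linear heat argument, whereas here one must simultaneously handle the full nonlinear commutator structure of $\dv(\kappa(\theta)\nabla\theta)$ and preserve uniform parabolicity, which is precisely what \eqref{kpa} and \eqref{biln}, supplemented by Gagliardo--Nirenberg, are designed to accomplish.
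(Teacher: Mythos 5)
The paper gives no real proof here: it simply says the result follows ``similarly'' to Proposition~\ref{le2.5}, whose own proof is one sentence invoking the standard Galerkin method and deferring the a priori estimates to the arguments of the regularity-criteria sections. Your proposal is therefore the same approach the authors intend --- Galerkin approximation, $H^2$-level energy estimates closed with \eqref{kpa}, \eqref{biln} and Gagliardo--Nirenberg, compactness, and a Gr\"onwall uniqueness argument --- just with the details actually written out. The overall architecture is sound.

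There is, however, one genuine gap in the way you justify uniform ellipticity for the approximate system. You invoke the maximum principle for the Galerkin approximation $\theta_n$ to get $\|\theta_n\|_{L^\infty}\le\|\theta_0\|_{L^\infty}$, and you need this because \eqref{bae} only guarantees $C_1^{-1}\le\mu,\kappa\le C_1$ on the range $|\theta|\le C_2$. But the maximum principle does not survive finite-dimensional (spectral) projection: multiplying the projected equation by $|\theta_n|^{p-2}\theta_n$ or by $(\theta_n-k)_+$ takes you out of the Galerkin subspace, so the standard truncation/De Giorgi arguments do not apply to $\theta_n$, and without the $L^\infty$ bound the coefficients $\mu(\theta_n),\kappa(\theta_n)$ need not be uniformly elliptic, which undercuts every subsequent energy estimate. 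The standard repair is to first replace $\mu,\kappa$ by globally elliptic extensions $\tilde\mu,\tilde\kappa$ agreeing with them on $[-C_2,C_2]$ and satisfying $C_1^{-1}\le\tilde\mu,\tilde\kappa\le C_1$ on all of $\mathbb{R}$, run the whole Galerkin scheme for the modified system, and only after passing to the limit apply the maximum principle to the limiting solution $\theta$ to conclude $|\theta|\le\|\theta_0\|_{L^\infty}\le C_2$, so that the solution of the modified problem solves the original one. (Two smaller points of the same flavor: on $\mathbb{R}^3$ neither the Stokes operator nor the Laplacian has eigenfunctions, so the projections should be frequency truncations or the problem should be posed on expanding domains; and Aubin--Lions on the whole space gives only local strong compactness, which suffices to pass to the limit in the nonlinear terms but should be stated as such.)
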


For the problem \eqref{bai}-\eqref{bal}, we have
\begin{Proposition}\label{le2.7}
Let $\kappa(\theta)$ satisfy \eqref{bam} and $\epsilon\in(0,1)$. Let $\theta_0\in H^2(\mathbb{R}^2)$ and $u_0\in H^3$ with $\dv u_0=0$ in $\mathbb{R}^2$.
Then, for fixed $\epsilon>0$, the problem \eqref{bai}-\eqref{bal} has a unique global solution $(u,\theta)$ satisfying
\begin{equation}
\|u\|_{L^\infty(0,T;H^3(\mathbb{R}^2))}+\|\theta\|_{L^\infty(0,T;H^2(\mathbb{R}^2))}+\|\theta\|_{L^2(0,T;H^3(\mathbb{R}^2))}\leq C(\epsilon)\label{2.7}
\end{equation}
for some positive constant $C(\epsilon)$.
\end{Proposition}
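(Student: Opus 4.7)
The plan is to construct the solution via a Galerkin approximation and extend it globally by a priori estimates; since $\epsilon>0$ is fixed here, all bounds are allowed to depend on $\epsilon$. I would first set up Faedo--Galerkin approximants $(u^N,\theta^N)$ in finite-dimensional divergence-free subspaces via the Leray projector, which reduces the system to a locally solvable ODE. Standard weak and strong compactness arguments (Aubin--Lions on finite intervals) then let one pass $N\to\infty$ to obtain a local strong solution with the regularity in \eqref{2.7} on some $[0,T^*]$. Uniqueness within this class follows from an $L^2$ estimate for the difference of two solutions, combined with the $L^\infty$ bound on $\theta$ (hence on $\kappa(\theta)$) and Gronwall's lemma.

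For the global extension I would close the a priori estimates in roughly the same order used in the proof of Theorem \ref{Tc}. First, the maximum principle applied to \eqref{bak}, using $\dv u=0$ and $\kappa(\theta)\ge C_1^{-1}>0$, yields $\|\theta(t)\|_{L^\infty}\le\|\theta_0\|_{L^\infty}$; in view of \eqref{bam} this gives $C_1^{-1}\le\kappa(\theta)\le C_1$ pointwise. Second, testing \eqref{bai} with $u$ and \eqref{bak} with $\theta$ yields the basic energy bounds on $\|u\|_{L^\infty(L^2)}$, $\epsilon^{1/2}\|\nabla u\|_{L^2(L^2)}$, $\|\theta\|_{L^\infty(L^2)}$ and $\|\nabla\theta\|_{L^2(L^2)}$. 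Third, viewing \eqref{bak} as a uniformly parabolic equation with bounded measurable coefficients $\kappa(\theta)$ and transport term $u$, one applies Amann's $L^p$ theory for parabolic systems to upgrade $\nabla\theta$ to $L^p(0,T;L^q)$ for suitable $(p,q)$.

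Fourth, the 2D structure of \eqref{bai} is exploited: the vorticity equation combined with the Biot--Savart law and the $L^p$ bound on the forcing $\theta e_2$ yield $H^1$-control of $u$, and further iteration with Gagliardo--Nirenberg (Lemma \ref{GN}) gives $H^2$-control. Applying $\Lambda^s$ or $\Delta$ to \eqref{bak}, using Kato--Ponce (Lemma \ref{Bilin}) and the composition estimate \eqref{biln} to handle $\kappa(\theta)$, produces the $H^2$ bound on $\theta$. Finally, the $H^3$ bound for $u$ is closed by means of the logarithmic Sobolev inequality (Lemma \ref{Log}) applied to $\nabla u$, coupled with a Gronwall argument.

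The main obstacle is the variable-coefficient dissipation $\dv(\kappa(\theta)\nabla\theta)$ in the higher-order estimates. After applying $\Lambda^s$ to \eqref{bak} one is left with the commutator $[\Lambda^s,\kappa(\theta)]\nabla\theta$, which must be split off, dominated by the dissipation $\kappa(\theta)|\Lambda^s\nabla\theta|^2$ plus terms integrable in time via Gronwall. This is exactly the step at which Lemma \ref{Bilin} and the Trudinger-type composition estimate \eqref{biln} enter, and where the $L^\infty$ bound on $\theta$ established in the first step is essential to keep the norms of $\kappa(\theta)$ under control. Once the a priori bounds close on any $[0,T]$, the local solution is extended globally and the estimate \eqref{2.7} follows.
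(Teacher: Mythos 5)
Your overall strategy coincides with the paper's: the authors likewise dispose of Proposition \ref{le2.7} by Galerkin approximation for local existence and then defer the global continuation entirely to the a priori estimate \eqref{ban}, whose proof (Section 5) runs through exactly the chain you describe --- maximum principle, $L^2$ energy, Amann's gradient estimate, higher-order bounds on $\theta$, the vorticity equation, and the logarithmic Sobolev inequality for the $H^3$ bound on $u$.

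There is, however, one concrete gap in your third step. You propose to apply Amann's $L^p$ theory to $\partial_t\theta-\dv(\kappa(\theta)\nabla\theta)=-\dv(u\theta)$ viewed as a uniformly parabolic equation with \emph{bounded measurable} coefficients. That is not enough: for divergence-form parabolic operators with merely bounded measurable coefficients, Meyers-type arguments give $\nabla\theta\in L^p$ only for $p$ in a small neighbourhood of $2$, whereas the argument needs $\nabla\theta\in L^4(0,T;L^4)$ (the analogue of \eqref{bef}). The maximal-regularity results from \cite{Am} that produce this require continuity of the coefficient $\kappa(\theta)$. This is why the paper first establishes the uniform H\"older continuity $\theta\in C^\alpha$ (estimate \eqref{bee}), obtained by the De Giorgi--Nash--Moser-type iteration of \cite{WZ}, before invoking Amann; the $L^\infty$ bound on $\theta$ alone, which is all you use, does not suffice. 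Once you insert this H\"older-continuity step, the rest of your outline (including the reordering of the $H^2$ bounds on $u$ and $\theta$, which is harmless) closes as in the paper.
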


\begin{proof}
The local existence of solution  can proved by the standard Galerkin method. To obtain the global existence of solution,
it suffices to obtain \eqref{2.7} and then apply continuity arguments. Noticing  that our estimates  \eqref{ban}  is stronger than
\eqref{2.7},   we only need to prove \eqref{ban} which will be presented later. Hence we omit the details here.
\end{proof}

Similarly, for the problem \eqref{bao}-\eqref{bar}, we have
\begin{Proposition}\label{le2.8}
Let $\kappa(\theta)$ satisfying \eqref{bam} and $\epsilon\in(0,1)$. Let $u_0,\theta_0\in H^2$ and $\dv u_0=0$ in $\mathbb{R}^2$.
Then, for fixed $\epsilon>0$,  the problem \eqref{bao}-\eqref{bar} has a unique global solution $(u,\theta)$ satisfying
\begin{equation}
\|u\|_{L^\infty(0,T;H^2(\mathbb{R}^2) )}+\|u\|_{L^2(0,T;H^3(\mathbb{R}^2) )}+\|\theta\|_{L^\infty(0,T;H^2(\mathbb{R}^2) )}\leq C(\epsilon)\label{2.8}
\end{equation}
for some positive constant $C(\epsilon)$.
\end{Proposition}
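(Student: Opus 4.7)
The plan is to establish Proposition \ref{le2.8} in two stages: local existence by a standard Galerkin approximation, followed by continuation to arbitrary $T>0$ via a priori $H^2\times H^2$ estimates. Throughout the argument, $\epsilon>0$ is fixed and all constants are allowed to depend on $\epsilon$. For local existence, I would project \eqref{bao}-\eqref{bar} onto finite-dimensional divergence-free subspaces for $u$ and standard subspaces for $\theta$. Because \eqref{bao} carries the full Laplacian and \eqref{baq} carries the uniformly elliptic operator $-\epsilon\,\dv(\kappa(\theta)\nabla\theta)$ (ellipticity constant $\epsilon C_1^{-1}>0$ while \eqref{bam} applies), standard energy estimates yield $H^2\times H^2$ bounds on the approximations on some interval $[0,T^*)$, and weak compactness plus a Gronwall estimate on the difference of two solutions produce a unique local strong solution.

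To extend globally, I would close the following hierarchy of a priori bounds. (i) Since $\dv u=0$ and $\epsilon\kappa(\theta)$ is positive, the parabolic maximum principle applied to \eqref{baq} yields $\|\theta(t)\|_{L^\infty}\leq\|\theta_0\|_{L^\infty}$, so \eqref{bam} remains in force throughout. (ii) Testing \eqref{bao} against $u$ and \eqref{baq} against $\theta$, then applying Gronwall with the coupling term $\theta e_2$, gives $u\in L^\infty_tL^2_x\cap L^2_tH^1_x$ and $\theta\in L^\infty_tL^2_x\cap L^2_tH^1_x$. (iii) The $2$D vorticity equation $\partial_t\omega+u\cdot\nabla\omega-\Delta\omega=\partial_1\theta$ with $\omega=\partial_1u_2-\partial_2u_1$ produces $\omega\in L^\infty_tL^2_x\cap L^2_tH^1_x$, hence $u\in L^\infty_tH^1_x\cap L^2_tH^2_x$. (iv) Testing \eqref{baq} against $-\Delta\theta$ generates the dissipative term $\epsilon C_1^{-1}\|\Delta\theta\|_{L^2}^2$, against which the variable-coefficient remainder $\int\kappa'(\theta)|\nabla\theta|^2\Delta\theta\,dx$ is absorbed via the $2$D inequality $\|\nabla\theta\|_{L^4}^4\leq C\|\nabla\theta\|_{L^2}^2\|\Delta\theta\|_{L^2}^2$ together with the $L^\infty$ bound from (i). (v) Applying $\Lambda^2$ to both equations, using the bilinear commutator estimate \eqref{kpa} on the transport terms and the composition estimate \eqref{biln} on $\kappa(\theta)$, closes the $H^2$ bounds on $u$ and $\theta$; the $L^2_tH^3_x$ bound on $u$ then follows from parabolic maximal regularity applied to the heat equation for $u$ with forcing $\theta e_2-u\cdot\nabla u-\nabla\pi$.

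The main obstacle is the variable coefficient $\kappa(\theta)$ in \eqref{baq}. Differentiating $\dv(\kappa(\theta)\nabla\theta)$ to any order produces products of derivatives of $\kappa(\theta)$ and $\theta$ that do not combine cleanly with the leading dissipation; these are controlled by the composition estimate \eqref{biln} applied to $\kappa$ and absorbed against $\epsilon C_1^{-1}\|\Delta\theta\|_{L^2}^2$ or $\epsilon C_1^{-1}\|\nabla\Delta\theta\|_{L^2}^2$. Such absorption is always available here because $\epsilon>0$ is fixed and the $L^\infty$ bound on $\theta$ from step (i) controls all compositions $\kappa^{(j)}(\theta)$; however, the resulting Gronwall constants blow up as $\epsilon\to 0$, so the output is precisely the $\epsilon$-dependent bound \eqref{2.8} rather than the uniform bound of Theorem \ref{Td}. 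Once \eqref{2.8} is in hand, the local solution is continued past $T^*$ by reapplying the local existence theorem at $t=T^*$, yielding existence on $[0,T]$.
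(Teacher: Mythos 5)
Your overall architecture (Galerkin local existence, a priori $H^2$ bounds, continuation) matches what the paper intends; note, though, that the paper's own proof of this proposition is essentially a deferral: it observes that the uniform estimate \eqref{bas} proved in Section 6 is stronger than \eqref{2.8}, so only local existence plus a continuity argument need be added. Your attempt to give a standalone $\epsilon$-dependent energy argument is legitimate in principle, but it breaks down at step (iv), and this is precisely the point the authors flag in Remark \ref{Rc}. Testing \eqref{baq} against $-\Delta\theta$ produces the remainder $\epsilon\int\kappa'(\theta)|\nabla\theta|^2\Delta\theta\,dx$, and neither of your proposed absorptions works: using $\|\nabla\theta\|_{L^4}^4\leq C\|\nabla\theta\|_{L^2}^2\|\Delta\theta\|_{L^2}^2$ after Young's inequality leaves a term $C\epsilon\|\nabla\theta\|_{L^2}^2\|\Delta\theta\|_{L^2}^2$, while using $\|\nabla\theta\|_{L^4}^2\leq C\|\theta\|_{L^\infty}\|\Delta\theta\|_{L^2}$ leaves $C\epsilon\|\theta_0\|_{L^\infty}\|\Delta\theta\|_{L^2}^2$. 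In both cases the coefficient of $\|\Delta\theta\|_{L^2}^2$ is proportional to $\epsilon$ but \emph{not} small relative to the dissipation $\epsilon C_1^{-1}\|\Delta\theta\|_{L^2}^2$ (the ratio is $\epsilon$-independent, so fixing $\epsilon$ buys you nothing here), and no smallness of $\|\theta_0\|_{L^\infty}$ or $\|\nabla\theta\|_{L^2}$ is assumed. The same defect propagates to your step (v), where the commutator terms from $\Lambda^2\dv(\kappa(\theta)\nabla\theta)$ require an a priori bound on $\|\nabla\theta\|_{L^4}$ that you have not yet established at that stage.

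The missing idea is the substitution $\widetilde\kappa(\theta):=\int_0^\theta\kappa(\xi)\,d\xi$, which turns \eqref{baq} into \eqref{bfg}, i.e. $\partial_t\widetilde\kappa(\theta)+u\cdot\nabla\widetilde\kappa(\theta)-\epsilon\kappa(\theta)\Delta\widetilde\kappa(\theta)=0$; testing this against $-\Delta\widetilde\kappa(\theta)$ eliminates the problematic remainder entirely and closes the $H^1$ bound on $\theta$ using only $\|\nabla u\|_{L^1(0,T;L^\infty)}$, which in turn comes from the Stokes maximal regularity bound \eqref{bfe}--\eqref{bff} (note that $u\in L^2_tH^2$ alone does not give $\nabla u\in L^2_tL^\infty$ in two dimensions, so this step is not optional). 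From there one extracts $\epsilon\|\nabla\theta\|_{L^4(0,T;L^4)}^4\leq C$ as in \eqref{bfj}, and only then do the $L^4$ and $H^2$ estimates \eqref{bfl}, \eqref{bfm} close. An alternative repair, used in Sections 4 and 5, is to first establish H\"older continuity of $\theta$ and invoke Amann's parabolic $L^p$ gradient estimates to get $\nabla\theta\in L^4(0,T;L^4)$ before attempting any higher-order energy estimate. Without one of these two devices your Gronwall loop does not close.
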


In the subsequent sections, we use $C$ (independent of $\epsilon$ in Sections 5 and 6)
to denote the positive constant which may change from line to line.
 We also omit the spatial  domain $\mathbb{R}^3$  or $\mathbb{R}^2$  in the integrals below
for simplicity.

\medskip
\section{Proof of Theorem \ref{Ta}}

This section is devoted to the proof of Theorem \ref{Ta}, by Proposition \ref{le2.5}, we only need to prove \eqref{bah}.

First, it follows from  maximum principle and (\ref{bac}) and
(\ref{bad}) that
\begin{equation}
\|\theta\|_{L^\infty(0,T;L^\infty(\mathbb{R}^3))}\leq\|\theta_0\|_{L^\infty(\mathbb{R}^3)}\leq
C.\label{bca}
\end{equation}

 Multiplying  (\ref{bac}) by $\theta$, integrating the result over $\mathbb{R}^3$,  and using (\ref{bab}), we see that
$$\frac{1}{2}\frac{d}{dt}\int\theta^2dx+\int|\nabla\theta|^2dx=0,$$
which gives
\begin{equation}
\|\theta\|_{L^\infty(0,T;L^2(\mathbb{R}^3))}+\|\theta\|_{L^2(0,T;H^1(\mathbb{R}^3))}\leq
C.\label{bcb}
\end{equation}

Multiplying (\ref{baa}) by $u$, integrating the result over $\mathbb{R}^3$,  and using (\ref{bab}), (\ref{bca}),
(\ref{bcb}) and (\ref{bae}), we find that
$$\frac{1}{2}\frac{d}{dt}\int u^2dx+\frac{1}{C}\int|\nabla u|^2dx\leq
\int\theta e_3\cdot u dx\leq\|u\|_{L^2}\|\theta\|_{L^2}\leq
C\|u\|_{L^2},$$ which implies
\begin{equation}
\|u\|_{L^\infty(0,T;L^2(\mathbb{R}^3))}+\|u\|_{L^2(0,T;H^1(\mathbb{R}^3))}\leq C.\label{bcc}
\end{equation}

\medskip
\emph{Case I:  Assume that (\ref{baf}) holds.}

Applying the operator $\partial_i$ to (\ref{bac}), multiplying the result
by $(\partial_i\theta)^3$, integrating   over $\mathbb{R}^3$, summing over $i$, and using (\ref{bab}), the
H\"{o}lder and Gagliardo-Nirenberg inequalities, we have
\begin{align*}
&\frac{1}{4}\frac{d}{dt}\int|\nabla\theta|^4dx+\frac{3}{4}
\int|\nabla|\nabla\theta|^2|^2dx\nonumber\\
\leq &
C\int|u||\nabla\theta|^2|\nabla|\nabla\theta|^2|dx\\
 \leq&C\|u\|_{L^q}\||\nabla\theta|^2\|_{L^\frac{2q}{q-2}}
\|\nabla|\nabla\theta|^2\|_{L^2}\\
 \leq&C\|u\|_{L^q}\||\nabla\theta|^2\|_{L^2}^{1-\alpha_1}
\|\nabla|\nabla\theta|^2\|_{L^2}^{1+\alpha_1}\\
 \leq&\frac{1}{2}\|\nabla|\nabla\theta|^2\|_{L^2}^2+C\|u\|_{L^q}^p
\||\nabla\theta|^2\|_{L^2}^2\ \ \left(p=\frac{2}{1-\alpha_1}\right).
\end{align*}
Hence  it holds
\begin{equation}
\|\nabla\theta\|_{L^\infty(0,T;L^4(\mathbb{R}^3))}\leq C.\label{bcd}
\end{equation}

Multiplying (\ref{baa}) by $-\Delta u$,  integrating the result over $\mathbb{R}^3$, and using (\ref{bab}),
(\ref{bcd}), the H\"{o}lder and Gagliardo-Nirenberg inequalities, we
derive that
\begin{align*}
&\frac{1}{2}\frac{d}{dt}\int|\nabla u|^2dx+\frac{1}{C}\int|\Delta
u|^2dx\nonumber\\
\leq &\int(u\cdot\nabla)u\cdot\Delta u
dx+C\int|\nabla\theta||\nabla u||\Delta u|dx\\
 \leq&\|u\|_{L^q}\|\nabla u\|_{L^\frac{2q}{q-2}}\|\Delta
u\|_{L^2}+C\|\nabla\theta\|_{L^4}\|\nabla u\|_{L^4}\|\Delta
u\|_{L^2}\\
 \leq&C\|u\|_{L^q}\|\nabla u\|_{L^2}^{1-\alpha_1}\|\Delta
u\|_{L^2}^{1+\alpha_1}+C\|\nabla u\|_{L^4}\|\Delta u\|_{L^2}\\
 \leq&\frac{1}{2C}\|\Delta u\|_{L^2}^2+C\big\{\|u\|_{L^q}^p\|\nabla
u\|_{L^2}^2+ \|\nabla u\|_{L^2}^2\big\},
\end{align*}
which gives
\begin{equation}
\|u\|_{L^\infty(0,T;H^1(\mathbb{R}^3))}+\|u\|_{L^2(0,T;H^2(\mathbb{R}^3))}\leq C.\label{bce}
\end{equation}

Applying the operator $\Delta$ to (\ref{bac}), multiplying the result by $\Delta\theta$, integrating over $\mathbb{R}^3$, and
using (\ref{bab}), (\ref{bce}), the H\"{o}lder and
Gagliardo-Nirenberg inequalities, we obtain that
\begin{align*}
&\frac{1}{2}\frac{d}{dt}\int|\Delta\theta|^2dx+\int|\nabla\Delta\theta|^2dx
\nonumber\\
=& \sum\limits_{i=1}^3\int\partial_i(u\cdot\nabla\theta)\cdot\partial_i\Delta\theta
dx\\
 \leq&(\|u\|_{L^6}\|\Delta\theta\|_{L^3}+\|\nabla
u\|_{L^2}\|\nabla\theta\|_{L^\infty})\|\nabla\Delta\theta\|_{L^2}\\
 \leq&C(\|\Delta\theta\|_{L^3}+\|\nabla\theta\|_{L^\infty})\|\nabla\Delta\theta\|_{L^2}\\
 \leq&C\|\Delta\theta\|_{L^2}^{1/2}\|\nabla\Delta\theta\|_{L^2}^{3/2}
\leq\frac{1}{2}\|\nabla\Delta\theta\|_{L^2}^2+C\|\Delta\theta\|_{L^2}^2,
\end{align*}
which leads to
\begin{equation}
\|\theta\|_{L^\infty(0,T;H^2(\mathbb{R}^3))}+\|\theta\|_{L^2(0,T;H^3(\mathbb{R}^3))}\leq
C.\label{bcf}
\end{equation}

Applying the operator $\Delta$ to (\ref{baa}), multiplying the result  by $\Delta u$, integrating over   $\mathbb{R}^3$, and using
(\ref{bab}), (\ref{bae}), (\ref{bca}), (\ref{biln}), and (\ref{bce}), we
arrive at
\begin{align*}
&\frac{1}{2}\frac{d}{dt}\int|\Delta
u|^2dx+\frac{1}{C}\int|\nabla\Delta u|^2dx\\
 \leq&\sum\limits_{i=1}^3\int\partial_i(u\cdot\nabla
u)\cdot\partial_i\Delta u dx+C\int(|\Delta\mu(\theta)||\nabla
u|+|\nabla\mu(\theta)||\nabla^2u|)|\nabla\Delta u|dx\\
 \leq&(\|u\|_{L^6}\|\Delta u\|_{L^3}+\|\nabla u\|_{L^2}\|\nabla
u\|_{L^\infty})\|\nabla\Delta u\|_{L^2}\\
&+C(\|\Delta\theta\|_{L^3}\|\nabla
u\|_{L^6}+\|\nabla\theta\|_{L^\infty}\|\Delta
u\|_{L^2})\|\nabla\Delta u\|_{L^2}\\
 \leq&C(\|\Delta u\|_{L^3}+\|\nabla u\|_{L^\infty})\|\nabla\Delta
u\|_{L^2}\\
&+C(\|\Delta\theta\|_{L^3}+\|\nabla\theta\|_{L^\infty})\|\Delta
u\|_{L^2}\|\nabla\Delta u\|_{L^2}\\
 \leq&\frac{1}{2}\|\nabla\Delta u\|_{L^2}^2+C\|\Delta
u\|_{L^2}^2+C(\|\Delta\theta\|_{L^3}^2+\|\nabla\theta\|_{L^\infty}^2)\|\Delta
u\|_{L^2}^2,
\end{align*}
which implies that
\begin{equation}
\|u\|_{L^\infty(0,T;H^2(\mathbb{R}^3))}+\|u\|_{L^2(0,T;H^3(\mathbb{R}^3))}\leq C.\label{bcg}
\end{equation}
Thus  \eqref{bah} holds.

\medskip
\emph{Case II: Assume that (\ref{bag}) holds.}

Applying the operator $\partial_i$ to (\ref{bac}), multiplying the result by
$(\partial_i\theta)^3$, integrating over $\mathbb{R}^3$, summing over $i$, and using (\ref{bab}), the
H\"{o}lder and Gagliardo-Nirenberg inequalities, we obtain that
\begin{align*}
&\frac{1}{4}\frac{d}{dt}\int|\nabla\theta|^4dx
+\frac{3}{4}\int|\nabla|\nabla\theta|^2|^2dx\\
 \leq&C\int|\nabla u||\nabla\theta|^2\cdot|\nabla\theta|^2dx\\
 \leq&C\|\nabla u\|_{L^q}\||\nabla\theta|^2\|_{L^\frac{2q}{q-1}}^2\\
 \leq&C\|\nabla u\|_{L^q}\||\nabla\theta|^2\|_{L^2}^{2(1-\alpha_2)}
\|\nabla|\nabla\theta|^2\|_{L^2}^{2\alpha_2}\\
 \leq&\frac{1}{2}\|\nabla|\nabla\theta|^2\|_{L^2}^2+C\|\nabla
u\|_{L^q}^p\||\nabla\theta|^2\|_{L^2}^2\ \
\left(p=\frac{1}{1-\alpha_2}\right),
\end{align*}
which proves (\ref{bcd}).

Multiplying (\ref{baa}) by $-\Delta u$, integrating the result over $\mathbb{R}^3$, and using (\ref{bab}),
(\ref{bcd}), the H\"{o}lder and Gagliardo-Nirenberg inequalities, we
get
\begin{align*}
&\frac{1}{2}\frac{d}{dt}\int|\nabla u|^2dx+\frac{1}{C}\int|\Delta
u|^2dx\\
 \leq&\sum\limits_i\int u_i\partial_iu\partial_j^2u
dx+C\int|\nabla\theta||\nabla u||\Delta u|dx\\
 \leq&-\sum\limits_{i=1}^3\int\partial_ju_i\partial_iu\partial_ju
dx+C\|\nabla\theta\|_{L^4}\|\nabla u\|_{L^4}\|\Delta u\|_{L^2}\\
 \leq&C\|\nabla u\|_{L^q}\|\nabla u\|_{L^\frac{2q}{q-1}}^2+C\|\nabla
u\|_{L^4}\|\Delta u\|_{L^2}\\
 \leq&C\|\nabla u\|_{L^q}\|\nabla u\|_{L^2}^{2(1-\alpha_2)}\|\Delta
u\|_{L^2}^{2\alpha_2}+C\|\nabla u\|_{L^4}\|\Delta u\|_{L^2}\\
 \leq&\frac{1}{2C}\|\Delta u\|_{L^2}^2+C\big\{\|\nabla u\|_{L^q}^p\|\nabla
u\|_{L^2}^2+\|\nabla u\|_{L^2}^2\big\},
\end{align*}
which implies (\ref{bce}).

Noticing that the  calculations  for (\ref{bcf}) and (\ref{bcg}) still hold in this case, we hence
  complete  the proof of Theorem \ref{Ta}.

\medskip
\section{Proof of Theorem \ref{Tb}}

This section is devoted to the proof of Theorem \ref{Tb}, by Proposition \ref{le2.6}, we only need to prove \eqref{bah}.

First, by  maximum principle, it is easy to prove that
\begin{equation}
\|\theta\|_{L^\infty(0,T;L^\infty(\mathbb{R}^3))}\leq C.\label{bda}
\end{equation}
Noticing the  condition \eqref{bae}, we still have (\ref{bcb}) and (\ref{bcc}).

Similar to the Proposition 4.1 in \cite{WZ}, we can prove

\begin{Proposition}\label{Proa}
Let $u$ satisfy (\ref{baf}) and $\dv u=0$ in
$\mathbb{R}^3\times(0,\infty)$. Assume that $\theta\in
L^\infty(0,T;L^2)\cap L^2(0,T;H^1)$ is a weak solution to
(\ref{bac}) and (\ref{bad}). Then there exists a $\alpha\in (0,1)$ such that
$\theta\in C^\alpha([0,T]\times\mathbb{R}^3)$.
\end{Proposition}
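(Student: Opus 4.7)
The plan is to regard \eqref{bac} as a linear parabolic equation
\[
\partial_t\theta-\dv(a(x,t)\nabla\theta)+u\cdot\nabla\theta=0,\qquad a(x,t):=\kappa(\theta(x,t)),
\]
with bounded measurable uniformly elliptic diffusion coefficient $a$ and with a drift $u$ in the scaling-critical Aronson--Serrin class, and then to invoke the classical parabolic De Giorgi--Nash--Moser regularity theory.

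First I would produce the $L^\infty_{t,x}$ bound on $\theta$: testing \eqref{bac} against $(\theta-k)_+$ with $k=\|\theta_0\|_{L^\infty}$, using $\dv u=0$ to kill the drift contribution and the positivity of $\kappa$ to drop the diffusion term, yields $\|\theta\|_{L^\infty_{t,x}}\le\|\theta_0\|_{L^\infty}\le C_2$. Combined with \eqref{bae} this gives $C_1^{-1}\le a(x,t)\le C_1$ a.e., so the equation is uniformly parabolic with merely bounded measurable coefficient.

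Next I would invoke De Giorgi--Nash--Moser theory for linear parabolic equations with drift, in the form due to Aronson--Serrin / Ladyzhenskaya--Solonnikov--Ural'tseva. The hypothesis \eqref{baf} (that is, $u\in L^p_tL^q_x$ with $2/p+3/q=1$ and $q>3$) places $u$ exactly in the scaling-critical Aronson--Serrin class, which is the standard hypothesis under which weak solutions lying in $L^\infty_tL^2_x\cap L^2_tH^1_x$ are H\"older continuous. Concretely, on a parabolic cylinder $Q_r$ one derives a Caccioppoli inequality by testing with $(\theta-k)_+\varphi^2$; the drift term is estimated by H\"older in $(q,\tfrac{2q}{q-2})$ in space and $(p,\tfrac{2p}{p-2})$ in time, and the parabolic Sobolev embedding $L^\infty_tL^2_x\cap L^2_tL^6_x\hookrightarrow L^{10/3}_{t,x}$ matches the exponents precisely when $2/p+3/q=1$. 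Moser iteration then gives local boundedness, and the oscillation-decay lemma of De Giorgi yields the H\"older exponent $\alpha\in(0,1)$.

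The main obstacle is that the iteration is scaling-critical: the drift term has exactly the same scaling as the diffusion, so the usual Moser smallness factor on shrinking cylinders is absent. The divergence-free condition $\dv u=0$ rescues the argument. Rewriting $u\cdot\nabla\theta=\dv(u\theta)$ and integrating by parts, the Caccioppoli estimate only has to control terms of the form $\int u\theta\,\nabla\varphi^2$ rather than $\int u\cdot\nabla\theta\,\varphi^2$, and the $L^p_tL^q_x$ norm of $u$ restricted to $Q_r$ tends to $0$ as $r\to 0$ by absolute continuity of the Lebesgue integral; this furnishes the missing smallness factor and closes the iteration. This is the same strategy as in Proposition~4.1 of \cite{WZ} for $\mathbb{R}^2$, and the transition to $\mathbb{R}^3$ only requires adjusting the Sobolev exponent in the embedding step, after which the proof carries over essentially verbatim.
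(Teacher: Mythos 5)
Your proposal is correct and follows essentially the same route as the paper: both run the De Giorgi truncation/oscillation argument of Proposition~4.1 in \cite{WZ} for the uniformly parabolic equation with bounded measurable coefficient $\kappa(\theta)$, and both isolate the same critical drift term $\int\theta_k^2\,u\cdot\nabla\eta\,\eta\,dx\,dt$, controlling it via H\"older with exponents $(q,\tfrac{2q}{q-2})$, Gagliardo--Nirenberg interpolation, and the smallness of $\int_{t_1}^{t_2}\|u\|_{L^q}^p\,dt$ on short time intervals supplied by \eqref{baf}. Your identification of the divergence-free structure and the absolute continuity of the time integral as the mechanism that closes the scaling-critical iteration is exactly the point of the paper's displayed estimate.
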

\begin{proof}
 Most of the calculations are as same as that in
\cite{WZ}, the only difference is the calculations of the following
term
\begin{align*}
&\left|\int_{t_1}^{t_2}\int\theta_k^2 u\cdot\nabla\eta\ \eta dx
dt\right|\\
 \leq&\int_{t_1}^{t_2}\|u\|_{L^q}\|\eta\theta_k\|_{L^\frac{2q}{q-2}}
\|\theta_k\nabla\eta\|_{L^2}dt\\
 \leq&\int_{t_1}^{t_2}C\|u\|_{L^q}\|\eta\theta_k\|_{L^2}^{1-\alpha_1}
\|\nabla(\eta\theta_k)\|_{L^2}^{\alpha_1}\|\theta_k\nabla\eta\|_{L^2}dt\
\ \left(p=\frac{2}{1-\alpha_1}\right)\\
 \leq&\epsilon\int_{t_1}^{t_2}\|u\|_{L^q}^2\|\eta\theta_k\|_{L^2}^{2(1-\alpha_1)}
\|\nabla(\eta\theta_k)\|_{L^2}^{2\alpha_1}dt+C\|\theta_k\nabla\eta\|_{L^2(t_1,t_2;L^2(\mathbb{R}^3))}^2\\
 \leq&\epsilon\int_{t_1}^{t_2}\|u\|_{L^q}^p
dt\|\eta\theta_k\|_{L^\infty(t_1,t_2;L^2(\mathbb{R}^3))}^2\nonumber\\
&+C\epsilon\|\nabla(\eta\theta_k)\|_{L^2(t_1,t_2;L^2(\mathbb{R}^3))}^2
+C\|\theta_k\nabla\eta\|_{L^2(t_1,t_2;L^2(\mathbb{R}^3))}^2
\end{align*}
for any $0<\epsilon<1$. This completes the proof.
\end{proof}

Now, using an estimate of the gradient of  solution to the following
parabolic equation $$\partial_t\theta-\dv(\kappa(\theta)\nabla\theta)=-\dv(u\theta),$$ we have (see \cite{Am})
\begin{align}
\|\nabla\theta\|_{L^p(0,T;L^q(\mathbb{R}^3))} \leq &
C\|u\theta\|_{L^p(0,T;L^q(\mathbb{R}^3))}+C \nonumber\\
\leq&
C\|\theta\|_{L^\infty(0,T;L^\infty)}\|u\|_{L^p(0,T;L^q(\mathbb{R}^3))}+C\nonumber\\
\leq &
C\|u\|_{L^p(0,T;L^q(\mathbb{R}^3))}+C.\label{bdb}
\end{align}

Multiplying (\ref{baa}) by $-\Delta u$,  integrating the result over $\mathbb{R}^3$, and using (\ref{bab}),
(\ref{bdb}), the H\"{o}lder and Gagliardo-Nirenberg inequalities, we
have
\begin{align*}
&\frac{1}{2}\frac{d}{dt}\int|\nabla u|^2dx+\frac{1}{C}\int|\Delta
u|^2dx\\
 \leq&\int(u\cdot\nabla)u\cdot\Delta u dx+C\int|\nabla\theta||\nabla
u||\Delta u|dx\\
 \leq&\|u\|_{L^q}\|\nabla u\|_{L^\frac{2q}{q-2}}\|\Delta
u\|_{L^2}+C\|\nabla\theta\|_{L^q}\|\nabla
u\|_{L^\frac{2q}{q-2}}\|\Delta u\|_{L^2}\\
 \leq&C(\|u\|_{L^q}+\|\nabla\theta\|_{L^q})\|\nabla
u\|_{L^2}^{1-\alpha_1}\|\Delta u\|_{L^2}^{1+\alpha_1}\\
 \leq&\frac{1}{2C}\|\Delta
u\|_{L^2}^2+C(\|u\|_{L^q}^p+\|\nabla\theta\|_{L^q}^p)\|\nabla
u\|_{L^2}^2,
\end{align*}
which gives (\ref{bce}).

Applying the operator $\Delta$ to (\ref{bac}), multiplying the result by $\Delta\theta$,   integrating  over $\mathbb{R}^3$, and
using (\ref{bab}), (\ref{bce}), (\ref{biln}), (\ref{bdb}), the H\"{o}lder
and Gagliardo-Nirenberg inequalities, we have
\begin{align*}
&\frac{1}{2}\frac{d}{dt}\int|\Delta\theta|^2dx+\frac{2}{C}\int|\nabla\Delta\theta|^2dx\\
 \leq&\sum\limits_{i=1}^3\int\partial_i(u\cdot\nabla\theta)\cdot\partial_i\Delta\theta
dx+C\|\nabla\theta\|_{L^q}\|\Delta\theta\|_{L^\frac{2q}{q-2}}\|\nabla\Delta\theta\|_{L^2}\\
 \leq&(\|u\|_{L^6}\|\Delta\theta\|_{L^3}+\|\nabla
u\|_{L^2}\|\nabla\theta\|_{L^\infty})\|\nabla\Delta\theta\|+C\|\nabla\theta\|_{L^q}
\|\Delta\theta\|_{L^2}^{1-\alpha_1}\|\nabla\Delta\theta\|_{L^2}^{1+\alpha_1}\\
 \leq&C(\|\Delta\theta\|_{L^3}+\|\nabla\theta\|_{L^\infty})\|\nabla\Delta\theta\|_{L^2}
+C\|\nabla\theta\|_{L^q}^p\|\Delta\theta\|_{L^2}^2+\frac{1}{4C}\|\nabla\Delta\theta\|_{L^2}^2\\
 \leq&\frac{1}{2C}\|\nabla\Delta\theta\|_{L^2}^2+C\big\{\|\Delta\theta\|_{L^2}^2
+ \|\nabla\theta\|_{L^q}^p\|\Delta\theta\|_{L^2}^2\big\},
\end{align*}
which implies (\ref{bcf}).

Noticing the  calculations  for   (\ref{bcg}) still hold here, we hence
  complete  the proof of Theorem \ref{Tb}.

\medskip
\section{Proof of Theorem \ref{Tc}}

To complete our proof of Theorem \ref{Tc}, by Proposition \ref{le2.7}, we only need to prove a priori estimates (\ref{ban}).
By  maximum principle, it follows from (\ref{bak}) and
(\ref{bal}) that
\begin{equation}
\|\theta\|_{L^\infty(0,T;L^\infty(\mathbb{R}^2))}\leq\|\theta_0\|_{L^\infty(\mathbb{R}^2)}\leq
C.\label{bea}
\end{equation}

Multiplying (\ref{bak}) by $\theta$, integrating the result over $\mathbb{R}^2$, and using (\ref{baj}), (\ref{bam}),
and (\ref{bea}), we see that
$$\frac{1}{2}\frac{d}{dt}\int\theta^2dx+\int
\kappa(\theta)|\nabla\theta|^2dx=0,$$ which gives
\begin{equation}
\|\theta\|_{L^\infty(0,T;L^2(\mathbb{R}^2))}+\|\theta\|_{L^2(0,T;H^1(\mathbb{R}^2))}\leq
C.\label{beb}
\end{equation}

Multiplying (\ref{bai}) by $u$, integrating the result over $\mathbb{R}^2$, and using (\ref{baj}) and (\ref{beb}), we
find that
 $$
\frac{1}{2}\frac{d}{dt}\int u^2dx+\epsilon\int|\nabla
u|^2dx=\int\theta e_2 u dx\leq\|\theta\|_{L^2}\|u\|_{L^2}\leq
C\|u\|_{L^2},$$
which yields
\begin{equation}
\|u\|_{L^\infty(0,T;L^2(\mathbb{R}^2))}+\sqrt\epsilon\|u\|_{L^2(0,T;H^1(\mathbb{R}^2))}\leq
C.\label{bec}
\end{equation}

Multiplying (\ref{bai}) by $-\Delta u$, integrating the result over $\mathbb{R}^2$, and using (\ref{baj}), (\ref{beb})
and the fact that $$\int(u\cdot\nabla)u\cdot\Delta u dx=0, $$
 we get
$$\frac{1}{2}\frac{d}{dt}\int|\nabla u|^2dx+\epsilon\int|\Delta u|^2dx
\leq\int|\nabla\theta||\nabla u|dx\leq\|\nabla\theta\|_{L^2}\|\nabla
u\|_{L^2},$$ which leads to
\begin{equation}
\|u\|_{L^\infty(0,T;H^1(\mathbb{R}^2))}+\sqrt\epsilon\|u\|_{L^2(0,T;H^2(\mathbb{R}^2))}\leq
C.\label{bed}
\end{equation}

Then by taking the same calculations as those in \cite{WZ}, we obtain that
\begin{equation}
\|\theta\|_{C^\alpha(\mathbb{R}^3\times[0,T])}\leq C\label{bee}
\end{equation}
for some $\alpha\in(0,1)$ independent of $\epsilon>0$.

Now, using an estimate of the gradient of  solution of the
parabolic equation $$\partial_t\theta-\dv(\kappa(\theta)\nabla\theta)=-\dv(u\theta),$$ we obtain (see \cite{Am})
\begin{align}
\|\nabla\theta\|_{L^4(0,T;L^4(\mathbb{R}^2))} \leq&\|u\theta\|_{L^4(0,T;L^4(\mathbb{R}^2))}+C\nonumber\\
 \leq&\|u\|_{L^4(0,T;L^4(\mathbb{R}^2))}\|\theta\|_{L^\infty(0,T;L^\infty(\mathbb{R}^2))}+C
 \leq C.\label{bef}
\end{align}

Applying the operator $\Delta$ to (\ref{bai}), multiplying the result  by $\Delta\theta$,  integrating   over $\mathbb{R}^2$,  and
using (\ref{baj}), (\ref{bed}), the H\"{o}lder and Gagliardo-Nirenberg
inequalities, we derive that
\begin{align*}
&\frac{1}{2}\frac{d}{dt}\int|\Delta\theta|^2dx+\frac{1}{C}\int|\nabla
\Delta\theta|^2dx\\
 \leq&\sum\limits_{i=1}^2\int\partial_i(u\cdot\nabla\theta)\partial_i\Delta\theta
dx+C\|\nabla\theta\|_{L^4}\|\Delta\theta\|_{L^4}\|\nabla\Delta\theta\|_{L^2}\\
 \leq&C(\|u\|_{L^4}\|\Delta\theta\|_{L^4}+\|\nabla
u\|_{L^2}\|\nabla\theta\|_{L^\infty})\|\nabla\Delta\theta\|_{L^2}
+C\|\nabla\theta\|_{L^4}\|\Delta\theta\|_{L^4}\|\nabla\Delta\theta\|_{L^2}\\
 \leq&C(\|\Delta\theta\|_{L^4}+\|\nabla\theta\|_{L^\infty})\|\nabla
\Delta\theta\|_{L^2}+C\|\nabla\theta\|_{L^4}
\|\Delta\theta\|_{L^4}\|\nabla\Delta\theta\|_{L^2}\\
 \leq&C(\|\Delta\theta\|_{L^2}^{1/2}\|\nabla\Delta\theta\|_{L^2}^{1/2}
+\|\nabla\theta\|_{L^4}^{2/3}\|\nabla\Delta\theta\|_{L^2}^{1/3})
\|\nabla\Delta\theta\|_{L^2}\\
&+C\|\nabla\theta\|_{L^4}\|\Delta\theta\|_{L^2}^{1/2}
\|\nabla\Delta\theta\|_{L^2}^{3/2}\\
 \leq&\frac{1}{2C}\|\nabla\Delta\theta\|_{L^2}^2+C\big\{\|\Delta\theta\|_{L^2}^2
+\|\nabla\theta\|_{L^4}^2+\|\nabla\theta\|_{L^4}^4\|\Delta\theta\|_{L^2}^2\},
\end{align*}
which implies that
\begin{equation}
\|\theta\|_{L^\infty(0,T;H^2(\mathbb{R}^2))}+\|\theta\|_{L^2(0,T;H^3(\mathbb{R}^2))}\leq
C.\label{beg}
\end{equation}
Here we have used the estimate (see    \eqref{biln}):
\begin{equation}
\|\Delta \kappa(\theta)\|_{L^p}\leq C\|\Delta\theta\|_{L^p}\ \ \text{with}\ \
1<p<\infty.\label{beh}
\end{equation}

Denote $\omega=\cl u=\partial_1u_2-\partial_2u_1$ for
$u=(u_1,u_2)^{\mathrm{T}}$.
  Taking $\cl$ to (\ref{bai}) and using
(\ref{baj}), we infer that
\begin{equation}
\partial_t\omega+u\cdot\nabla\omega-\epsilon\Delta\omega=\partial_1\theta.\label{bei}
\end{equation}
Multiplying (\ref{bei}) by $|\omega|^{q-2}\omega$,   integrating the result over $\mathbb{R}^2$, and using (\ref{baj}) and
(\ref{beg}), we have $$\frac{1}{q}\frac{d}{dt}\int|\omega|^q
dx\leq\int\partial_1\theta|\omega|^{q-2}\omega
dx\leq\|\partial_1\theta\|_{L^q}\|\omega\|_{L^q}^{q-1},$$
 which gives
$$\frac{d}{dt}\|\omega\|_{L^q}\leq C\|\nabla\theta\|_{L^q},$$ and
thus
\begin{equation}
\|\omega\|_{L^q}\leq\|\omega_0\|_{L^q}+C\int_0^T\|\nabla\theta\|_{L^q}dt.\label{bej}
\end{equation}
Taking $q\rightarrow+\infty$ in (\ref{bej}), we obtain that
\begin{equation}
\|\omega\|_{L^\infty(0,T;L^\infty(\mathbb{R}^2))}\leq C.\label{bek}
\end{equation}

Applying the operator $\Lambda^3$ to (\ref{bai}),  multiplying the result by $\Lambda^3u$, integrating   over $\mathbb{R}^2$,
and using (\ref{baj}), (\ref{beg}), (\ref{bek}), (\ref{kpa}), and \eqref{log}, we conclude
that
\begin{align*}
 \frac{1}{2}\frac{d}{dt}\int|\Lambda^3u|^2dx
 \leq&-\int(\Lambda^3(u\cdot\nabla u)-u\nabla\Lambda^3u)\Lambda^3u
dx+\int\Lambda^3(\theta e_2)\cdot\Lambda^3 u dx\\
 \leq&C\|\nabla
u\|_{L^\infty}\|\Lambda^3u\|_{L^2}^2+C\|\Lambda^3\theta\|_{L^2}
\|\Lambda^3u\|_{L^2}\\
 \leq&C(1+\log(e+\|\Lambda^3u\|_{L^2}))\|\Lambda^3u\|_{L^2}^2
+C\|\Lambda^3\theta\|_{L^2}\|\Lambda^3u\|_{L^2},
\end{align*}
which leads to
\begin{equation}
\|u\|_{L^\infty(0,T;H^3 (\mathbb{R}^2))}\leq C.\label{bel}
\end{equation}
This completes the proof of Theorem \ref{Tc}.

\medskip
\section{Proof of Theorem \ref{Td}}

To complete our proof of Theorem \ref{Td}, by Proposition \ref{le2.8}, we only need to prove a priori estimates (\ref{bas}).
First, the estimate  (\ref{bea}) still holds. Similarly to (\ref{beb}) and (\ref{bec}), we have
\begin{align}
&\|\theta\|_{L^\infty(0,T;L^2 (\mathbb{R}^2))}+\sqrt\epsilon\|\theta\|_{L^2(0,T;H^1 (\mathbb{R}^2))}\leq
C,\label{bfa}\\
&\|u\|_{L^\infty(0,T;L^2 (\mathbb{R}^2))}+\|u\|_{L^2(0,T;H^1 (\mathbb{R}^2))}\leq C.\label{bfb}
\end{align}

Multiplying (\ref{bao}) by $-\Delta u$,   integrating the result over $\mathbb{R}^2$, and using (\ref{bap}), (\ref{bfa})
and the fact that
$$\int(u\cdot\nabla)u\cdot\Delta u dx=0,$$
we find that
\begin{align*}
\frac{1}{2}\frac{d}{dt}\int|\nabla u|^2dx+\int|\Delta
u|^2dx = -\int\theta e_2\Delta u dx
 \leq \|\theta\|_{L^2}\|\Delta u\|_{L^2}\leq C\|\Delta u\|_{L^2},
\end{align*}
which implies that
\begin{equation}
\|u\|_{L^\infty(0,T;H^1 (\mathbb{R}^2))}+\|u\|_{L^2(0,T;H^2 (\mathbb{R}^2))}\leq C.\label{bfc}
\end{equation}

By the Sobolev embedding theorem, it is easy to check that
\begin{equation}
\partial_tu-\Delta u+\nabla\pi=f,  \quad f:=\theta e_2-u\cdot\nabla u\in
L^2(0,T;L^p (\mathbb{R}^2)),\label{bfd}
\end{equation}
for any $p\in(2,\infty)$. Therefore, it follows from (\ref{bfd}) and
the regularity theory of Stokes equation that
\begin{equation}
\|u\|_{L^2(0,T;W^{2,p} (\mathbb{R}^2))}\leq C\|f\|_{L^2(0,T;L^p (\mathbb{R}^2))}+C\leq C.\label{bfe}
\end{equation}
And thus
\begin{equation}
\|\nabla u\|_{L^2(0,T;L^\infty (\mathbb{R}^2))}\leq C\|u\|_{L^2(0,T;W^{2,p} (\mathbb{R}^2))}\leq
C.\label{bff}
\end{equation}

Let $$\widetilde \kappa(\theta):=\int_0^\theta \kappa(\xi)d\xi.$$
We deduce from   (\ref{baq}) that
\begin{equation}
\partial_t\widetilde \kappa(\theta)+u\cdot\nabla\widetilde
\kappa(\theta)-\epsilon \kappa(\theta)\Delta\widetilde \kappa(\theta)=0.\label{bfg}
\end{equation}
Multiplying (\ref{bfg}) by $-\Delta\widetilde \kappa(\theta)$, integrating the result over $\mathbb{R}^2$, and  using
(\ref{bap}) and (\ref{bff}), we derive that
\begin{align*}
& \frac{1}{2}\frac{d}{dt}\int|\nabla\widetilde \kappa(\theta)|^2dx
+\epsilon\int \kappa(\theta)|\Delta\widetilde \kappa(\theta)|^2dx\\=& \int u\nabla\widetilde
\kappa(\theta)\Delta\widetilde \kappa(\theta) dx=-\int\sum\limits_{i,j}\partial_ju_i\partial_i\widetilde
\kappa(\theta)\partial_j\widetilde \kappa(\theta) dx
 \leq \|\nabla u\|_{L^\infty}\|\nabla\widetilde \kappa(\theta)\|_{L^2}^2,
\end{align*}
which implies
\begin{equation}
\|\theta\|_{L^\infty(0,T;H^1 (\mathbb{R}^2))}+\sqrt\epsilon\|\Delta\widetilde
\kappa(\theta)\|_{L^2(0,T;L^2 (\mathbb{R}^2))}\leq C.\label{bfh}
\end{equation}

Noting that (see \eqref{biln})
\begin{align}
\sqrt\epsilon\|\Delta\theta\|_{L^2(0,T;L^2 (\mathbb{R}^2))}=&\sqrt\epsilon
\|\Delta(\widetilde \kappa^{-1}\circ\widetilde
\kappa(\theta))\|_{L^2(0,T;L^2 (\mathbb{R}^2))}\nonumber\\
 \leq&\sqrt\epsilon C\|\Delta\widetilde \kappa(\theta)\|_{L^2(0,T;L^2 (\mathbb{R}^2))}\leq
C.\label{bfi}
\end{align}
By the Gagliardo-Nirenberg inequality, it follows from (\ref{bfh})
and (\ref{bfi}) that
\begin{align}
 \epsilon\|\nabla\theta\|_{L^4(0,T;L^4 (\mathbb{R}^2))}^4
 =&\epsilon\int_0^T\|\nabla\theta\|_{L^4}^4dt\leq C\epsilon\int_0^T
\|\nabla\theta\|_{L^2}^2\|\Delta\theta\|_{L^2}^2dt\nonumber\\
 \leq&C\epsilon\int_0^T\|\Delta\theta\|_{L^2}^2dt\leq C.\label{bfj}
\end{align}

Applying the operator $\partial_i$ to (\ref{baq}) gives
\begin{equation}
\partial_t\partial_i\theta+u\nabla\partial_i\theta+\partial_iu
\cdot\nabla\theta-\epsilon\dv\{\kappa(\theta)\nabla\partial_i\theta
+k'(\theta)\partial_i\theta\nabla\theta\}=0.\label{bfk}
\end{equation}
Multiplying (\ref{bfk}) by $(\partial_i\theta)^3$,  integrating the result over $\mathbb{R}^2$, summing over $i$, and
using (\ref{bap}), (\ref{bff}) and (\ref{bfj}), we obtain
\begin{align*}
&\frac{1}{4}\frac{d}{dt}\int|\nabla\theta|^4dx+\frac{\epsilon}{C}
\int|\nabla|\nabla\theta|^2|^2dx\\
 \leq&C\epsilon\int|\nabla\theta|\cdot|\nabla\theta|^2|\nabla|\nabla\theta|^2|dx
+C\|\nabla u\|_{L^\infty}\int|\nabla\theta|^4dx\\
 \leq&C\epsilon\|\nabla\theta\|_{L^4}\|(\nabla\theta)^2\|_{L^4}\|\nabla|\nabla
\theta|^2\|_{L^2}+C\|\nabla u\|_{L^\infty}\int|\nabla\theta|^4dx\\
 \leq&C\epsilon\|\nabla\theta\|_{L^4}\||\nabla\theta|^2\|_{L^2}^{1/2}
\cdot\|\nabla|\nabla\theta|^2\|_{L^2}^{3/2}+C\|\nabla
u\|_{L^\infty}\int|\nabla\theta|^4dx\\
 \leq&\frac{\epsilon}{2C}\|\nabla|\nabla\theta|^2\|_{L^2}^2+C\epsilon
\|\nabla\theta\|_{L^4}^4\||\nabla\theta|^2\|_{L^2}^2+C\|\nabla
u\|_{L^\infty}\int|\nabla\theta|^4dx,
\end{align*}
which gives
\begin{equation}
\|\nabla\theta\|_{L^\infty(0,T;L^4(\mathbb{R}^2))}\leq C.\label{bfl}
\end{equation}

Applying the operator $\Delta$ to (\ref{baq}), multiplying the result by $\Delta\theta$,  integrating over $\mathbb{R}^2$, and using
\eqref{biln}, (\ref{bfl}), (\ref{beh}), (\ref{bfe}), and (\ref{bff}), we conclude
that
\begin{align*}
&\frac{1}{2}\frac{d}{dt}\int|\Delta\theta|^2dx+\frac{\epsilon}{C}
\int|\nabla\Delta\theta|^2dx\\
 \leq&-\int(\Delta(u\cdot\nabla\theta)-u\nabla\Delta\theta)\Delta\theta
dx+C\epsilon\|\nabla\theta\|_{L^4}\|\Delta
k\|_{L^4}\|\nabla\Delta\theta\|_{L^2}\\
 \leq&C(\|\Delta u\|_{L^4}\|\nabla\theta\|_{L^4}+\|\nabla
u\|_{L^\infty}\|\Delta\theta\|_{L^2})\|\Delta\theta\|_{L^2}
+C\epsilon\|\Delta\theta\|_{L^4}\|\nabla\Delta\theta\|_{L^2}\\
 \leq&C(\|\Delta u\|_{L^4}+\|\nabla u\|_{L^\infty}
\|\Delta\theta\|_{L^2})\|\Delta\theta\|_{L^2}+\frac{\epsilon}{2C}
\|\nabla\Delta\theta\|_{L^2}^2+C\epsilon\|\Delta\theta\|_{L^2}^2,
\end{align*}
which leads to
\begin{equation}
\|\theta\|_{L^\infty(0,T;H^2 (\mathbb{R}^2))}\leq C.\label{bfm}
\end{equation}

Finally, applying the operator $\Delta$ to (\ref{bao}), multiplying the result by $\Delta u$, integrating over $\mathbb{R}^2$, and
using (\ref{bap}), (\ref{bfm}), and (\ref{bff}), we arrive at
\begin{align*}
&\frac{1}{2}\frac{d}{dt}\int|\Delta u|^2dx+\int|\nabla\Delta
u|^2dx\\
 =&\int\Delta\theta e_2\cdot\Delta u dx-\int\Delta(u\cdot\nabla
u)\cdot\Delta u dx\\
 \leq&\|\Delta\theta\|_{L^2}\|\Delta u\|_{L^2}+C\|\nabla
u\|_{L^\infty}\|\Delta u\|_{L^2}^2\\
 \leq&C\|\Delta u\|_{L^2}+C\|\nabla u\|_{L^\infty}\|\Delta
u\|_{L^2}^2,
\end{align*}
which gives
$$
\|u\|_{L^\infty(0,T;H^2(\mathbb{R}^2))}+\|u\|_{L^2(0,T;H^3(\mathbb{R}^2))}\leq
C.
$$
This completes the proof of Theorem \ref{Td}.


\medskip

{\bf Acknowledgements:}
 The authors are very grateful to the anonymous  referee for his/her constructive
comments and helpful suggestions, which  considerably improved the earlier version of this paper.
 Fan was supported by NSFC (Grant No. 11171154). Li was supported by NSFC (Grant No. 11271184, 10971094),
 NCET-11-0227, PAPD, and the Fundamental Research Funds for the Central Universities.



\end{document}